\documentclass[10pt,reqno]{amsart}   
\usepackage{amssymb,amscd,latexsym}   
\usepackage{amsmath}
\usepackage{amsthm,times}
\usepackage[all]{xy}
\usepackage{epsfig,graphicx}
\usepackage{graphicx,color}
\usepackage[utf8]{inputenc}
\usepackage{float}
\usepackage{tikz}
\usepackage{enumitem}

\usepackage{tikz-cd}

\newcommand{\llar}{-\kern-5pt-\kern-5pt\longrightarrow}

\usepackage{cases}

\usepackage{multirow}
\usepackage{makecell} 

\newtheorem{Theorem}{Theorem} 
\newtheorem{Lemma}[Theorem]{Lemma}

\newtheorem{prop}[Theorem]{Proposition}
\newtheorem{Remark}[Theorem]{Remark}
\newtheorem{Example}[Theorem]{Example}


\DeclareMathOperator{\Hom}{Hom}

\DeclareMathOperator{\Image}{Im}

\DeclareMathOperator{\ext}{Ext}
\DeclareMathOperator{\gl}{GL}

\DeclareMathOperator{\rank}{rank}

\DeclareMathOperator{\E}{\rm E}

\newcommand{\op}[1]{{\mathcal O}_{\mathbb{P}^{#1}}}

\def\i{{\rm i}\,}
\def\ii{{\rm ii}\,}
\def\iii{{\rm iii}\,}
\def\iv{{\rm iv}\,}
\def\v{{\rm v}\,}

\def\EE{{\mathcal{E}}}


\def\ker{{\rm ker}\,}

\def\restr{{\kern-1pt\restriction\kern-1pt}}

\def\X{{\mathcal X}}

\def\E{{\mathcal E}}



\keywords{}

\subjclass[2010]{}

\begin{document}

\title[Classification of minimal monads and a new component of $\mathcal{B}(9)$]{Classification of monads and a new moduli component of stable rank 2 bundles 
on $\mathbb{P}^3$ with even determinant and $c_2=9$.}

\author{Aislan Leal Fontes}
\address{Departamento de Matem\'atica, UFS - Campus Itabaiana. Av. Vereador Ol\'impio Grande s/n, 49506-036 Itabaiana/SE, Brazil}
\email{aislan@ufs.br}
%



\maketitle
\begin{abstract}
The goal of this paper is to classify all minimal monads whose cohomology is a stable rank 2
bundle on $\mathbb{P}^3$ with Chern classes $c_1=0$ and $c_2=9$, with possible exception 
of two non-negative minimal monads, and thus we extend the classification of the minimal monads made by 
Hartshorne and Rao in \cite[Section 5.3]{HR91} when $c_2\leq8$. We also prove the existence of a new 
component of the moduli space $\mathcal{B}(9)$ which is distinct from the Hartshorne and Ein components.
\end{abstract}

\tableofcontents
\section{Introduction}
In \cite{M77}, Maruyama constructed the moduli space $\mathcal{B}_X(c_1, c_2,\cdots c_r)$ of 
stable rank $r$ sheaves with fixed Chern classes $c_1, c_2,\cdots c_r$ on a locally Noetherian 
scheme $X$ and this moduli space has structure of algebraic 
quasi-projective variety. In this paper, let us denote $\mathcal{B}(e,m)$ the moduli space of the 
stable rank 2 vector bundles on $X=\mathbb{P}^3$ with 
$c_1=e, c_2=m$ that up to normalization, it is sufficient to consider stable rank 2 bundles with 
$c_1=-1$ or $c_1=0$. Additionally, $c_2$ is even when $c_1=-1$. With the desire to study this 
moduli space, we recall that there are three ways to produce a stable rank 2 bundle 
on $\mathbb{P}^3$. Due to Horrocks \cite{Ho64}, every vector bundle on $\mathbb{P}^3$ is
given as the cohomology of a monad whose terms are summands of line bundles. We can also study stable rank 2 bundles 
from their spectra as well as analyze the irreducible components of the moduli space of stable vector 
bundles with Chern classes fixed. We observe that there is no straightforward 
relation between these three classification schemes: a
monad can be associated with two vector bundles with different spectra; a single spectrum
can be associated with different monads; and an irreducible component of the moduli space
may contain points corresponding to different spectra and monads.

Hartshorne and Rao in \cite{HR91} showed that a sequence of $c_2$ integers, $c_2\leq20$, satisfying the conditions 
\ref{itema1}, \ref{itema2} and \ref{itema4}  is realized as spectrum of a stable rank 2 bundle 
$\mathcal{E}$ on $\mathbb{P}^3$ with $c_1(\mathcal{E})=0, c_2(\mathcal{E})=c_2$ and they also determine in \cite{HR91} all possible 
minimal monads with $c_2\leq8$, completing the work \cite{Bar77} which assumed stable bundles on 
$\mathbb{P}^3$ such that its Rao module does not have generators of degree $\geq0$. In this paper, we extend the 
considerations made by Hartshorne and Rao determining all possible minimal monads when $c_2=9$, with possible 
exception of two non-negative minimal monads cf. subsection \ref{sec4}. We use this classification of minimal 
monads and the formula 33 of \cite[p.g. 23]{MF2021} to show the existence of a new component of the 
moduli space of stable rank 2 bundles on $\mathbb{P}^3$ with 
$c_1=0, c_2=9$. 

 We can see in \cite[Subsection 5.2]{HR91} that the moduli spaces 
$\mathcal{B}(0,1)$ and $\mathcal{B}(0,2)$ are irreducible of dimension 5 and 13, respectively, while in \cite{ES} and \cite{C83} is proved that
$\mathcal{B}(0,3)$ and $\mathcal{B}(0,4)$ have two irreducible components. The moduli space
$\mathcal{B}(0,5)$ is completely characterized in \cite{AJTT} and in \cite{TV2019} a new infinity series is constructed 
whose elements are irreducible components of the moduli space $\mathcal{B}(0,n)$, distinct of
the Hartshorne and Ein components, for infinite values of $n$. For simplicity, $\mathcal{B}(m)$ stands the moduli 
space of the stable rank 2 vector bundles on $\mathbb{P}^3$ with $c_1=0, c_2=m$. 

Now we talk about the sections of this paper. In addition to addressing the three ways to classify 
stable rank 2 bundles on $\mathbb{P}^3$, we list in Section \ref{section2} all the possible spectra 
of a stable rank 2 bundle on $\mathbb{P}^3$ with $c_1=0, c_2=9$ and we collect in Lemma 2 a 
characterization of the minimal Horrocks monad whose cohomology is a stable 
rank 2 bundle with even determinant. In Section \ref{section3} we list all possible minimal monads 
whose cohomology is a stable rank 2 bundle on $\mathbb{P}^3$ with $c_1=0, c_2=9$. 

In Section \ref{section4} we use Theorem \ref{teo4} and Lemma \ref{lema48} to list all the existing minimal 
Horrocks monads whose cohomology is a stable 
rank 2 bundle  with $c_1=0, c_2=9$, with possible exception of two nonegative minimal monads, and 
thus we extending the Table of the subsection 5.3 of \cite{HR91}. In Section \ref{sec5} we explicitly show the 
existence of an infinite family of positive minimal Horrocks monad and we prove that the moduli space 
$\mathcal{B}(9)$ has a new component, this one is distinct of the Hartshorne and Ein components.  

\section{Constructing vector bundles of rank $2$ on $\mathbb{P}^3$}\label{section2}
There are three forms to construct vector bundles on $\mathbb{P}^3$. Due to Horrocks \cite{Ho64}, every vector 
bundle on $\mathbb{P}^3$ is given as cohomology of a minimal monad whose terms are sums of line 
bundles. The second form to obtain a stable vector bundle $\mathcal{\E}$ is through the concept of a spectrum 
that encodes the information of the cohomology groups $H^1_*(\mathbb{P}^3,\mathcal{\E})$ and
$H^2_*(\mathbb{P}^3,\mathcal{\E})$, as observed in Section \ref{sub2} of this
paper. We also obtain a rank 2 vector bundle through the Serre correspondence.

\subsection{Minimal Horrocks monads}
A \textit{monad} on $\mathbb{P}^3$ is a complex

\begin{equation}\label{eq222}
\mathbf{M}:\ \ \  \ \mathcal{A}\stackrel{\alpha}{\longrightarrow}\mathcal{B}\stackrel{\beta}{\longrightarrow}\mathcal{C},
\end{equation}
of vector bundles on $\mathbb{P}^3$ such that the map $\alpha$ is injective and $\beta$ is surjective. The sheaf 
$\mathcal{E}:=\ker \beta/\Image\alpha$ is the \textit{cohomology of the monad} $\mathbf{M}$ of \eqref{eq222}. Let us assume that 
the morphism $\beta$ in \eqref{eq222} is locally left invertible, and so the cohomology of the 
monad $\EE$ of $\mathbf{M}$ is a vector bundle. Furthermore, due to Horrocks (see \cite{Ho64}) follows that all vector bundles on $\mathbb{P}^n$ can be obtained as cohomology of a 
monad of the form \eqref{eq222}, where the vector bundles $\mathcal{C}, \mathcal{B}$ 
and $\mathcal{A}$ are sums of line bundles. 

We assume that the morphism $\alpha$ is locally left invertible, so that $\mathcal{E}$ is a vector 
bundle. A monad $\mathbf{M}$ of the form $\eqref{eq222}$ is called \textit{minimal} if the entries of the 
associated matrices to the maps $\alpha$ and $\beta$, as homogeneous forms, are not nonzero 
constants or, in other words, if no direct summand of $\mathcal{A}$ is the 
image of a line subbundle of $\mathcal{B}$ and if no direct summand of $\mathcal{C}$ goes 
onto a direct summand of $\mathcal{B}$. 
In addition, $\mathbf{M}$ is said to be
\textit{homotopy free} if
$$ \Hom(\mathcal{C},\mathcal{B})=\Hom(\mathcal{B},\mathcal{A})=0. $$
We recall that a minimal monad as in display \eqref{eq222} is said: \textit{positive} if all summand of $\mathcal{C}$ has positive degree, \textit{non-negative} if the summands of $\mathcal{C}$ have non-negative degree and \textit{negative} if $\mathcal{C}$ has at least a summand of negative degree. By considering $\EE$ be a stable rank 2 bundle on $\mathbb{P}^3$ with even determinant there is a unique isomorphism 
$f:\EE\longrightarrow\EE^{*}$ with symplectic structure which means $f^{*}=-f$. In parallel to the description of minimal monads whose cohomology is a stable rank 2 bundle in $\mathbb{P}^3$ with $c_1=-1$ made in \cite[Lemma 3]{MF2021} we have the following characterization for minimal Horrocks monads whose cohomology is a stable bundle with even determinant.

\begin{Lemma}\label{lema3}
Let $\mathcal{E}$ be a stable rank 2 bundle on $\mathbb{P}^3$ with $c_1(\mathcal{E})=0$. There are two 
sequences of integers $\boldsymbol{a}=(a_1,\dots,a_s)$ and $\boldsymbol{b}=(b_1,\dots,b_{s+1})$ such 
that $\mathcal{E}$ is the cohomology of a monad of the form
\begin{equation}\label{eq6}
\bigoplus_{i=1}^{s}\op3(-a_i) \stackrel{\alpha}{\longrightarrow} \bigoplus_{j=1}^{s+1}
\Big(\op3(b_j)\oplus\op3(-b_j)\Big) \stackrel{\beta}{\longrightarrow} \bigoplus_{i=1}^{s}\op3(a_i),
\end{equation}
where we order the tuples $\boldsymbol{a}$ and $\boldsymbol{b}$ as $a_1\le \cdots \le a_s$ and 
$0\le b_1\le \cdots \le b_{s+1}$. 
Furthermore, 
\begin{equation}\label{c_2}
c_2=\displaystyle\sum_{i=1}^{s}a_i^2-\displaystyle\sum_{i=1}^{s+1}b_i^2.
\end{equation}
\end{Lemma}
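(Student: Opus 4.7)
The plan is to combine Horrocks' theorem with the symplectic self-duality that $c_1(\mathcal{E})=0$ forces on $\mathcal{E}$. Horrocks' theorem provides a minimal Horrocks monad
\begin{equation*}
\mathcal{A}\stackrel{\alpha}{\longrightarrow}\mathcal{B}\stackrel{\beta}{\longrightarrow}\mathcal{C}
\end{equation*}
with cohomology $\mathcal{E}$, whose terms $\mathcal{A},\mathcal{B},\mathcal{C}$ are direct sums of line bundles. Dualising yields a minimal monad $\mathcal{C}^{\ast}\to\mathcal{B}^{\ast}\to\mathcal{A}^{\ast}$ with cohomology $\mathcal{E}^{\ast}$. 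Since $c_1(\mathcal{E})=0$ there is a symplectic isomorphism $f:\mathcal{E}\to\mathcal{E}^{\ast}$; by the uniqueness of minimal Horrocks monads up to isomorphism, $f$ lifts to an isomorphism between the original monad and its dual. In complete parallel with \cite[Lemma 3]{MF2021}, this forces
\begin{equation*}
\mathcal{A}\cong\mathcal{C}^{\ast}\qquad\text{and}\qquad\mathcal{B}\cong\mathcal{B}^{\ast}.
\end{equation*}

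Writing $\mathcal{C}=\bigoplus_{i=1}^{s}\op3(a_i)$ with $a_{1}\leq\cdots\leq a_{s}$, the identification $\mathcal{A}\cong\mathcal{C}^{\ast}$ gives $\mathcal{A}=\bigoplus_{i=1}^{s}\op3(-a_i)$, and in particular $\rk\mathcal{A}=\rk\mathcal{C}=s$. Writing $\mathcal{B}=\bigoplus_{k}\op3(c_k)$, the self-duality $\mathcal{B}\cong\mathcal{B}^{\ast}$ forces the multiset $\{c_k\}$ to be invariant under negation, so the summands of $\mathcal{B}$ group into pairs $\op3(b_j)\oplus\op3(-b_j)$ with $b_j\geq 0$. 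The rank identity $\rk\mathcal{B}=\rk\mathcal{A}+\rk\mathcal{C}+\rk\mathcal{E}=2s+2$ yields exactly $s+1$ such pairs, which I would order as $0\leq b_{1}\leq\cdots\leq b_{s+1}$, producing a monad of the shape \eqref{eq6}.

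For formula \eqref{c_2}, I would use additivity of the Chern character along the two short exact sequences into which the monad decomposes, obtaining
\begin{equation*}
\mathrm{ch}(\mathcal{E})=\mathrm{ch}(\mathcal{B})-\mathrm{ch}(\mathcal{A})-\mathrm{ch}(\mathcal{C}).
\end{equation*}
The degree-two part of the right-hand side is $\sum_{j}b_{j}^{2}-\sum_{i}a_{i}^{2}$, while $c_1(\mathcal{E})=0$ implies that the degree-two part of the left-hand side equals $-c_2(\mathcal{E})$. Rearranging delivers \eqref{c_2}.

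The main obstacle is the first step, where the abstract symplectic isomorphism $f:\mathcal{E}\to\mathcal{E}^{\ast}$ must be promoted to an honest isomorphism between the minimal monad and its dual. Here minimality is essential: it is precisely the condition ruling out homotopies with invertible scalar entries, which in turn guarantees that chain maps between minimal monads are rigid enough to force isomorphisms term by term. Once that rigidity is in hand, everything else is routine bookkeeping with degrees and ranks.
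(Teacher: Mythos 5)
Your proof is correct and follows exactly the route the paper intends: the paper gives no explicit proof of this lemma, instead invoking the symplectic isomorphism $f:\mathcal{E}\to\mathcal{E}^{*}$ and the parallel with \cite[Lemma 3]{MF2021}, which is precisely the self-duality-of-the-minimal-monad argument you spell out. Your rank count $\rk\mathcal{B}=2s+2$ and the Chern character computation giving \eqref{c_2} are both accurate.
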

\begin{Example}
\textup{The minimal monads of the form
$$\op3(-c)\stackrel{\alpha}{\longrightarrow}\op3(a)\oplus\op3(-a)\oplus\op3(b)\oplus\op3(-b)\stackrel{\beta}{\longrightarrow}\op3(c),$$
with $c>a\geq b\geq0$, are called \textit{Ein monads} while minimal monads of the form
$$\op3(-1)^{\oplus s}\stackrel{\alpha}{\longrightarrow}\oplus\op3^{2s+2}\stackrel{\beta}{\longrightarrow}\op3(1)^{\oplus s}, s\geq1,$$
are called \textit{Hartshorne monads}.}
\end{Example}

Under the hypothesis of Lemma \ref{lema3}, if $\mathcal{E}$ is a stable bundle and its Rao module 
$M:=H^1_*(\mathcal{E})$ has a minimal free presentation of the form
$$ \cdots\rightarrow F_1\rightarrow F_0\rightarrow M\rightarrow0,$$
then $\rank(F_0)=s$ and from \cite[Proposition 2.2]{R84}, we have $\rank(F_1)=2s+2$.
Conversely, by repeating the arguments of Hartshorne and Rao in \cite[Proposition 3.2]{HR91}, we get 
that if $M$ admits a minimal free presentation as above, then $\mathcal{E}$ is given as cohomology 
of a minimal Horrocks monad as in \eqref{eq222} where $F_0=H^0_*(\mathcal{A})$ and 
$F_1=H^0_*(\mathcal{B})$, this is, $\mathcal{A}$ and $\mathcal{B}$ are the 
sheafifications of the free modules $F_0$ and $F_1$, respectively.

\subsection{Spectrum possibilities of a stable bundle with even determinant and $c_2=9$}\label{sub2}
Let $\mathcal{E}$ be a rank $2$ stable vector bundle on $\mathbb{P}^3$ such that $c_1(\mathcal{E})=0$ 
and $c_2(\mathcal{E})=n$. The \textit{spectrum} of $\mathcal{E}$ is a unique 
multiset of integers $\X(\mathcal{E})=\{k_1,k_2,\cdots,k_n\}$ satisfying the 
following properties
\begin{enumerate}[label=(\alph*)]
\item \label{itemb1} $h^1(\mathbb{P}^3,\mathcal{E}(l)) = h^0(\mathbb{P}^1,\mathcal{H}(l+1))$ for $l\leq-1$ and
\item $h^2(\mathbb{P}^3,\mathcal{E}(l)) = h^1(\mathbb{P}^1,\mathcal{H}(l+1))$ for $l\geq-2$,
\end{enumerate}
where $\mathcal{H}=\bigoplus\op1(k_i)$. If a sequence of integers $\X=\{k_1,k_2,\cdots,k_{n}\}$ is the spectrum of a rank $2$ even 
stable bundle, then the following properties hold c.f. \cite[Section 7]{H80}:
\begin{enumerate}[label=\textbf{S.\arabic*}]
\item \label{itema1} (Symmetry)$\{k_i\}=\{-k_i\}$;
\item \label{itema2} (Connectedness) Any integer $k$ between two integers of $\X$ also belongs to spectrum $\X$;
\item\label{itema4} If $k=\max\{-k_i\}$ and there is an integer $u$ with $-k\leq u\leq-2$ that 
occurs just once in $\X$, then each $k_i\in\X$ with $-k\leq k_i\leq u$ 
occurs exactly once, see \cite[Proposition 5.1]{H82}.\\
\end{enumerate}
Another spectrum property $\X$ of a stable bundle $\EE$ which we will need is
\begin{equation}\label{S4}
n_l:=h^1(\EE(-l))-h^1(\EE(-l-1))=\#\{k_j\in\X:k_j\geq l-1\}, l\geq1.    
\end{equation}
From the properties 
of spectrum follows that the spectrum $\X^{c_2}$ of $\EE$ can be written as
\begin{equation}\label{eq3}
\mathcal{X}^{c_2}=\{{(-k)}^{s(k)}...,0^{s(0)},\cdots, k^{s(k)}\}.
\end{equation}
If we take $c_2=9$ we can list all spectrum possibilities, see Table \ref{spectra}. 

\begin{table}[ht]
\begin{tabular}{|p{10.0cm}|}\hline
 \begin{center}Spectrum\end{center}\\
\hline
$\X_1^{9}=\{0^9\}$\\
\hline
$\X_2^{9}=\{\cdots,0^7, 1\}$\\
\hline
$\X_3^{9}=\{\cdots,0^5, 1^2\}, \X_4^{9}=\{\cdots,0^5, 1, 2\}$\\
\hline
$\X_5^{9}=\{\cdots,0^3, 1^2, 2\}, \X_6^{9}=\{\cdots,0^3, 1, 2, 3\}$\\
\hline
$\X_7^{9}{=}\{\cdots,0, 1^4\}, \X_8^{9}{=}\{\cdots,0, 1^3, 2\}, \X_{9}^{9}{=}\{\cdots,0, 1^2, 2^2\}$,
$\X_{10}^{9}{=}\{\cdots,0, 1^2, 2, 3\}, \X_{11}^{9}{=}\{\cdots,0, 1, 2, 3, 4\}$.\\
\hline
\end{tabular}
\medskip
\caption{Possible spectra for stable rank $2$ vector bundles with $c_1=0$ and $c_2(\mathcal{E})=9$.}
\label{spectra}
\end{table}
\subsection{Serre Construction}\label{serre}
If we consider a rank 2 vector bundle $\mathcal{G}$ on $\mathbb{P}^3$ with first Chern class $c_1$ and 
$s\in H^0(\mathbb{P}^3,\mathcal{G}(t))$, for some $t\in\mathbb{Z}$, is a global section whose zero set has codimension 2, 
then $s^{\vee}$ induces an exact sequence
\begin{equation}\label{serre2}
 0\rightarrow\op3\rightarrow\mathcal{G}(t)\stackrel{s^{\vee}}{\rightarrow}\mathcal{I}_C(c_1+2t)\rightarrow0,   
\end{equation}
where $C$ is a Cohen-Macaulay curve in $\mathbb{P}^3$ equal to vanish locus of $s$. The exact sequence in 
\eqref{serre2} corresponds to an element $\eta\in\ext^1(\mathcal{I}_C(c_1+2t),\op3)\simeq H^0(\omega_C(4-2t-c_1))$ which generates the sheaf 
$\omega_C(4-2t-c_1)$. Conversely, given a pair $(C,\eta)$ where $C$ is a Cohen-Macaulay curve on 
$\mathbb{P}^3$ and $\eta$ is a non-vanishing global section of $\omega_C(4-2t-c_1)$ we get a rank 2 
bundle $\mathcal{F}$ on $\mathbb{P}^3$ and a global section $s$ of $\mathcal{F}(t)$ which zero set 
is $C$. Additionally, the Chern class of $\mathcal{F}$ are given by
$$c_1(\mathcal{F})=t \mbox{ and } c_2(\mathcal{F})=\deg C.$$
The above content is the \textit{Serre correspondence} and for more details see \cite{H78}. This Serre correspondence was 
generalized for rank 2 reflexive sheaves on $\mathbb{P}^3$ and can be found in \cite{H80}. We note that 
$\mathcal{G}$ is stable if and only if $H^0(\mathcal{I}_C(t+c_1))=0$.
\section{Possible positive minimal Horrocks monads}\label{section3}
Let $\EE$ be a stable rank $2$ vector bundle on $\mathbb{P}^3$ with $c_1=0$ and $c_2=9$. 
If we denote $A=\mathbf{k}[X_0,X_1,X_2, X_3]$ to be the ring of homogeneous polynomials, then 
$M:=H_*^1(\EE)=\displaystyle\bigoplus_{l\in\mathbb{Z}}H^1(\EE(l))$ is called \textit{first cohomology module} 
of $\EE$ which is also a module over $A$. For each integer $l$ let's consider $m_l=h^1(\EE(l))=\dim M_l$ and
$$
\rho(l)=\dim\left[H^1(\EE(l))/\op3(1)\otimes H^1(\EE(l-1))\right],
$$
that is, $\rho(l)$ is the number of minimal generators for $M$ in degree $l$. Now we apply the following 
result, see \cite[Proposition 3.1]{HR91}, to list the possibilities 
of positive minimal Horrocks monads 
associated to bundles of rank two on $\mathbb{P}^3$ with $c_1=0$ and $c_2=9$ and spectrum fixed.
\begin{Theorem}\label{possibles}
Let $\mathcal{E}$ be a stable vector bundle of rank $2$ on $\mathbb{P}^3$ with $c_1(\mathcal{E})=0$ 
and spectrum as in display \eqref{eq3}. 
We have, 
\begin{equation}
\rho(-k-1)=m_{-k-1}=s(k)
\end{equation}
and for $0\leq i < k$,
\begin{equation}\label{ineq1}
s(i)-2\displaystyle\sum_{j\geq i+1}s(j)\leq\rho(-i-1)\leq s(i)-1.
\end{equation}
\end{Theorem}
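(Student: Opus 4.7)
The plan is to handle the equality $\rho(-k-1) = m_{-k-1} = s(k)$ by direct computation from the spectrum, to derive the lower bound via a dimension chase in the twisted Koszul complex, and to derive the upper bound via restriction to a general plane combined with a structural argument exploiting the spectrum's connectedness.

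For the equality, the spectrum condition $h^1(\EE(l)) = h^0(\mathcal{H}(l+1))$ with $\mathcal{H} = \bigoplus_j \op1(k_j)$ valid for $l \leq -1$ gives $m_{-k-1} = \sum_j \max(0, k_j + 1 - k) = s(k)$ (only $k_j = k$ contributes, by $1$ each) and $m_{-k-2} = 0$. Since $M_{-k-2} = 0$, every element of $M_{-k-1}$ is a minimal generator, so $\rho(-k-1) = m_{-k-1} = s(k)$.

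For the lower bound $\rho(-i-1) \geq s(i) - 2\sum_{j \geq i+1} s(j)$, I would exploit that the twisted Koszul complex $\Lambda^\bullet A_1 \otimes M(-\bullet) \to M$ is a chain complex. Iterating $\dim \ker d_r \geq \dim(\text{image } d_{r+1})$ along the complex (which terminates at $K^4$) yields $\dim(A_1 \cdot M_{-i-2}) \leq 4 m_{-i-2} - 6 m_{-i-3} + 4 m_{-i-4} - m_{-i-5}$, hence
\[
\rho(-i-1) \geq m_{-i-1} - 4 m_{-i-2} + 6 m_{-i-3} - 4 m_{-i-4} + m_{-i-5}.
\]
Substituting the spectrum formula $m_{-i-r} = \sum_{p \geq i+r-1} s(p)(p-i-r+2)$ and collecting coefficients, only $s(i), s(i+1), s(i+2)$ survive (the coefficients of $s(p)$ vanish identically for $p \geq i+3$), yielding the sharper bound $\rho(-i-1) \geq s(i) - 2 s(i+1) + s(i+2)$. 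Since $3 s(i+2) + 2 s(i+3) + \cdots \geq 0$, this implies the stated inequality.

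For the upper bound $\rho(-i-1) \leq s(i) - 1$, I would fix a general plane $H = \{h = 0\}$, set $\mathcal{F} = \EE|_H$, and use stability of $\mathcal{F}$ on $\mathbb{P}^2$ (so that $H^0(\mathcal{F}(l)) = 0$ for $l \leq -1$) to extract from the long exact sequence of $0 \to \EE(l-1) \to \EE(l) \to \mathcal{F}(l) \to 0$ the injection $M_l/hM_{l-1} \injects H^1(\mathcal{F}(l))$. Iterating the restriction to a general line $L \subset H$, on which $\EE|_L = \mathcal{H}$ realizes the spectrum, expresses the quotient $M_{-i-1}/A_1 M_{-i-2}$ inside a quotient of $H^1(\mathcal{F}(-i-1))$ computable from the spectrum. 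The critical "$-1$" comes from observing that, since $0 \leq i < k$, the $s(k) \geq 1$ generators of $M$ in degree $-k-1$, multiplied by forms of positive degree $k-i$, must produce a nonzero image in $A_1 \cdot M_{-i-2} \subset M_{-i-1}$, forcing at least one otherwise-free direction in degree $-i-1$ to be redundant. The main obstacle will be making this propagation step rigorous, as it relies on the connectedness (S.2) and symmetry (S.1) of the spectrum to ensure that multiples of deep generators reach $M_{-i-1}$ without being annihilated by intermediate multiplications; a fallback route is a dual argument via Serre duality $M_l \cong H^2(\EE(-l-4))^*$ converting the bound on $\rho(-i-1)$ into a socle estimate for $N = H^2_*(\EE)$ in degree $i-3$.
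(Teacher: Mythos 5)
First, a point of comparison: the paper does not prove this statement at all --- it quotes it verbatim from Hartshorne--Rao \cite[Proposition 3.1]{HR91} --- so you are in effect reproving their result. Your first part is correct and complete: property \ref{itemb1} gives $m_l=\sum_j\max(0,k_j+l+2)$ for $l\le -1$, hence $m_{-k-1}=s(k)$ and $m_{-k-2}=0$, and the vanishing of $M_{-k-2}$ forces every element of $M_{-k-1}$ to be a minimal generator. The lower bound, however, contains a genuine gap. Writing $d_r$ for the Koszul differentials in total degree $-i-1$, you need an upper bound on $\dim A_1M_{-i-2}=\dim\im(d_1)=4m_{-i-2}-\dim\ker(d_1)$, hence a lower bound on $\dim\ker(d_1)$. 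The inclusion $\ker(d_1)\supseteq\im(d_2)$ gives $\dim\ker(d_1)\ge 6m_{-i-3}-\dim\ker(d_2)$, and at the next stage you need an \emph{upper} bound on $\dim\ker(d_2)$, whereas the complex property only supplies the lower bound $\dim\ker(d_2)\ge\dim\im(d_3)$. The iteration therefore stalls after one step unless the intermediate Koszul homology ($\Tor_2$ and $\Tor_4$ of $M$) vanishes in the relevant degrees, which you do not prove and which is false in general. A warning sign is that your conclusion $\rho(-i-1)\ge s(i)-2s(i+1)+s(i+2)$ is at least as strong as, and strictly stronger than, the stated bound whenever $s(i+2)>0$.

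The bound that this method actually yields uses only one step plus the injectivity of a general linear form: since $H^0(\EE_H(l))=0$ for a general plane $H$ and all $l\le-1$, a general $h\in A_1$ acts injectively $M_{l-1}\to M_l$ in negative degrees, so for a basis $h,x_1,x_2,x_3$ of $A_1$ the $3m_{-i-3}$ Koszul relations $x_j\otimes hm-h\otimes x_jm$ ($m\in M_{-i-3}$) are linearly independent in $\ker(d_1)$; this gives $\dim A_1M_{-i-2}\le 4m_{-i-2}-3m_{-i-3}$, i.e.\ $\rho(-i-1)\ge m_{-i-1}-4m_{-i-2}+3m_{-i-3}=n_{i+1}-3n_{i+2}$, which by \eqref{S4} is exactly $s(i)-2\sum_{j\ge i+1}s(j)$. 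For the upper bound your outline (restriction to a general plane and line) is the right framework, but the mechanism you propose for the crucial ``$-1$'' does not work: the nonzero classes in $h^{k-i}M_{-k-1}$ already lie in $hM_{-i-2}$, so they do not enlarge $A_1M_{-i-2}$ beyond the subspace $hM_{-i-2}$, which only gives $\rho(-i-1)\le m_{-i-1}-m_{-i-2}=s(i)+\sum_{j\ge i+1}s(j)$. Getting down to $s(i)-1$ requires comparing the images of two general linear forms and exploiting the $\op1$-module structure that underlies the spectrum (the sheaf $\mathcal{H}$); this is the actual content of Hartshorne--Rao's argument and is the step missing from your proposal.
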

Fixed a spectrum $\X$, we wish list all minimal 
Horrocks monads whose cohomology is a rank two stable bundle $\EE$ with $c_1=0, c_2=9$ and 
spectrum $\X$. If we consider the spectrum $\X_1^9$ then $k=0$ and the only possibility of minimal Horrocks monad is 
$$9\cdot\op3(-1)\stackrel{\alpha}{\longrightarrow}
20\cdot\op3\stackrel{\beta}{\longrightarrow}
9\cdot\op3(1),
$$
whose cohomology is an Instanton bundle. For the spectrum $\X_{11}^9$, a direct 
verification provides the minimal monad (unique)
$$\op3(-5)\stackrel{\alpha}{\longrightarrow}
\op3(4)\oplus2\cdot\op3\oplus\op3(-4)\stackrel{\beta}{\longrightarrow}
\op3(5),
$$
which is an Ein type monad. Its existence is proved in \cite{Ein88}. To the other spectra the number 
of possible minimal Horrocks monadsgrows as the number of minimal generators 
of $H_*^1(\EE)$ varies, so we first list 
the number of minimal generators of $H_*^1(\EE)$ in 
the Table \ref{terms} according to Theorem \ref{possibles}. 
\begin{table}[ht]
    \centering
    \begin{tabular}{|c|c|c|c|c|}
    \hline
     Spectrum & $k$& $\rho(-k-1)$& $i$& $\rho(-i-1)\in$\\
     \hline
     $\X_2^{9}$& 1& 1& 0& $\{5, 6\}$\\
     \hline
   $\X_3^{9}$& 1& 2& 0& $\{1, 2, 3, 4\}$\\
   \hline
   \multirow{2}{*}{$\X_4^{9}$}&\multirow{2}{*}{2}&\multirow{2}{*}{1}& 0 &$\{1, 2, 3, 4\}$\\
   \cline{4-5}
   &&& 1& $\{0\}$\\
   \hline
   \multirow{2}{*}{$\X_5^{9}$}&\multirow{2}{*}{2}&\multirow{2}{*}{1}& 0 &$\{0, 1, 2\}$\\
   \cline{4-5}
   &&& 1& $\{0, 1\}$\\
   \hline
   \multirow{3}{*}{$\X_{6}^{9}$}&\multirow{3}{*}{3}&\multirow{3}{*}{1}& 0&$\{0,1, 2\}$\\
   \cline{4-5}
&&&1&$\{0\}$\\
\cline{4-5}
&&&2&$\{0\}$\\
   \hline
$\X_7^{9}$ &1 &4 & 0&$\{0\}$\\
   \hline
   \multirow{3}{*}{$\X_{8}^{9}$}&\multirow{3}{*}{2}&\multirow{3}{*}{1}& 0&$\{0\}$\\
   \cline{4-5}
&&&1&$\{1, 2\}$\\
   \hline
   \multirow{2}{*}{$\X_9^{9}$}&\multirow{2}{*}{2}&\multirow{2}{*}{2}& 0 &$\{0\}$\\
   \cline{4-5}
   &&& 1& $\{0, 1\}$\\
   \hline
      \multirow{3}{*}{$\X_{10}^{9}$}&\multirow{3}{*}{3}&\multirow{3}{*}{1}& 0&$\{0\}$\\
   \cline{4-5}
&&&1&$\{0, 1\}$\\
\cline{4-5}
&&&2&$\{0\}$\\
   \hline
    \end{tabular}
    \caption{Number of minimal generators of $H_*^1(\EE)$}
    \label{terms}
\end{table}

In the following, we organize the possibilities of positive minimal Horrocks monads. To simplify the notation we consider in the Equation \eqref{c_2}
$$B_s:=\displaystyle\sum_{j=1}^{s+1}b_j^2.$$


\vspace{1cm}
$\left(\mathbf{\X_2^{9}}, \rho(-2)=1\right)$
\begin{itemize}
    \item$\rho(-1)=5$ imply $\boldsymbol{a}=2, 1^5$ and
    $$B_6=0\Leftrightarrow \boldsymbol{b}=0^{14}.$$
    \item $\rho(-1)=6$ imply $\boldsymbol{a}=2, 1^6$ and
    $$B_7=1\Leftrightarrow \boldsymbol{b}=1, 0^{14}.$$
\end{itemize}
$\left(\mathbf{\X_3^{9}}, \rho(-2)=2\right)$
\begin{itemize}
    \item$\rho(-1)=1$ imply $\boldsymbol{a}=2^2, 1$ and
    $$B_3=0\Leftrightarrow \boldsymbol{b}=0^{8}.$$
    \item $\rho(-1)=2$ imply $\boldsymbol{a}=2^2, 1^2$ and
    $$B_4=1\Leftrightarrow \boldsymbol{b}=1, 0^{8}.$$
    \item $\rho(-1)=3$ imply $\boldsymbol{a}=2^2, 1^3$ and
    $$B_5=2\Leftrightarrow \boldsymbol{b}=1^2, 0^{8}.$$
    \item $\rho(-1)=4$ imply $\boldsymbol{a}=2^2, 1^4$ and
    $$B_4=3\Leftrightarrow \boldsymbol{b}=1^3, 0^{8}.$$
\end{itemize}
$\left(\mathbf{\X_4^{9}}, \rho(-3)=1\right)$

\begin{itemize}
    \item$\rho(-1)=1$ imply $\boldsymbol{a}=1, 3$ and
    $$B_2=1\Leftrightarrow \boldsymbol{b}=1,0^4.$$
    \item $\rho(-1)=2$ imply $\boldsymbol{a}=3,1^2$ and
    $$B_3=2\Leftrightarrow \boldsymbol{b}=1^2, 0^4.$$
    \item $\rho(-1)=3$ imply $\boldsymbol{a}=3,1^3$ and
    $$B_4=3\Leftrightarrow \textcolor{blue}{\boldsymbol{b}=1^3,0^4}.$$
    \item $\rho(-1)=4$ imply $\boldsymbol{a}=3,1^4$ and
    $$B_5=4\Leftrightarrow \textcolor{blue}{\boldsymbol{b}=1^4,0^4} \mbox{ or } \boldsymbol{b}=2, 0^{10}.$$
\end{itemize}
$\left(\mathbf{\X_5^{9}}, \rho(-3)=1\right)$
\begin{itemize}
    \item $\rho(-1)=\rho(-2)=0$ imply $\boldsymbol{a}=3$ and
    $$B_1=0\Leftrightarrow \boldsymbol{b}=0^4.$$
    \item $\rho(-1)=0$ and $\rho(-2)=1$ imply $\boldsymbol{a}=3, 2$ and
    $$B_2=4\Leftrightarrow \boldsymbol{b}=2, 0^4.$$
     \item $\rho(-1)=1$ and $\rho(-2)=0$ imply $\boldsymbol{a}=3, 1$ and
    $$B_2=1\Leftrightarrow \boldsymbol{b}=1, 0^4.$$
    \item $\rho(-1)=\rho(-2)=1$ imply $\boldsymbol{a}=3, 2, 1$ and
    $$B_3=5\Leftrightarrow \boldsymbol{b}=2, 1, 0^4.$$
    \item $\rho(-1)=2$ and $\rho(-2)=0$ imply $\boldsymbol{a}=3, 1^2$ and
    $$B_3=2\Leftrightarrow \boldsymbol{b}=1^2, 0^4.$$
    \item $\rho(-1)=2$ and $\rho(-2)=1$ imply $\boldsymbol{a}=3, 1^2, 2$ and
    $$B_4=6\Leftrightarrow \boldsymbol{b}=2, 1^2, 0^4.$$
\end{itemize}
$\left(\mathbf{\X_6^{9}}, \rho(-4)=1\right)$
\begin{itemize}
    \item $\rho(-1)=0$ imply $\boldsymbol{a}=4$ and $B_1=7$ which has no solution.
    \item $\rho(-1)=1$ imply $\boldsymbol{a}=4,1$ and
    $$B_2=8\Leftrightarrow \textcolor{blue}{\boldsymbol{b}=2^2, 0^2}.$$
    \item $\rho(-1)=2$ imply $\boldsymbol{a}=4,1^2$ and
    $$B_3=9\Leftrightarrow \textcolor{blue}{\boldsymbol{b}=2^2,1,0^2} \mbox{ or } \boldsymbol{b}=3, 0^6.$$
    
\end{itemize}
$\left(\mathbf{\X_7^{9}}, \rho(-2)=4\right)$
\begin{itemize}
    \item $\rho(-1)=0$ imply $\boldsymbol{a}=2^4$ and
    $$B_4=7\Leftrightarrow \textcolor{red}{\boldsymbol{b}=2, 1^3, 0^2}.$$
    
\end{itemize}
$\left(\mathbf{\X_8^{9}}, \rho(-3)=1\right)$
\begin{itemize}
    \item $\rho(-2)=1$ imply $\boldsymbol{a}=3, 2$ and
    $$B_2=4\Leftrightarrow \boldsymbol{b}=2, 0^4.$$
\item $\rho(-2)=2$ imply $\boldsymbol{a}=3, 2^2$ and
    $$B_2=8\Leftrightarrow \textcolor{blue}{\boldsymbol{b}=2^2, 0^4}.$$    
\end{itemize}
$\left(\mathbf{\X_9^{9}}, \rho(-3)=2\right)$
\begin{itemize}
  \item $\rho(-2)=0$ imply $\boldsymbol{a}=3^2$ and
  $$B_2=9\Leftrightarrow \textcolor{red}{\boldsymbol{b}=3, 0^4} \mbox{ or } \boldsymbol{b}=2^2, 1 .$$
  \item $\rho(-2)=1$ imply $\boldsymbol{a}=3^2,2$ and
  $$B_3=13\Leftrightarrow \boldsymbol{b}=2^3,1 \mbox{ or } \textcolor{red}{\boldsymbol{b}=3,2, 0^4}.$$
\end{itemize}
$\left(\mathbf{\X_{10}^{9}}, \rho(-4)=1\right)$
\begin{itemize}
    \item $\rho(-2)=0$ imply $B_1=7$ which has no solution.
   \item $\rho(-2)=1$ imply $\boldsymbol{a}=4, 2$ and
    $$B_2=11\Leftrightarrow \boldsymbol{b}=3, 1^2.$$
\end{itemize}
\begin{Lemma}\label{nonstable22}
Let $b_1, b_2, b_3$ and $a_1, a_2$ be integers such that 
\begin{enumerate}
\item $a_2>b_1\geq b_2\geq b_3>a_1\geq0$.
\item $b_1+b_2+b_3\geq a_2$.
\end{enumerate}

The vector 
bundle $\EE$ on $\mathbb{P}^3$ that is cohomology of the minimal monad
\begin{equation*}
\op3(-a_2)\oplus g\cdot\op3(-a_1) \stackrel{\alpha}{\longrightarrow}
\displaystyle\bigoplus_{i=1}^3\op3(b_i)\oplus B \stackrel{\beta}{\longrightarrow}
\op3(a_2)\oplus g\cdot\op3(a_1).
\end{equation*}
is not stable, where $B$ is a ($2g+1$)-vector bundle.
\end{Lemma}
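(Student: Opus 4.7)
The plan is to exhibit a rank-1 subsheaf of $\mathcal{E}$ with non-negative first Chern class, contradicting the stability of $\mathcal{E}$ (which, since $c_1(\mathcal{E})=0$, forbids any rank-1 subsheaf of degree $\geq 0$). The destabilising subsheaf will be produced from the block structure of the monad forced by the inequalities $b_i > a_1$.

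First I would set $\mathcal{F}:=\bigoplus_{i=1}^{3}\op3(b_i)$ and observe that, because $b_i>a_1$ for each $i$, the Hom-groups $\Hom(\op3(b_i),\op3(a_1))=H^0(\op3(a_1-b_i))$ all vanish. Consequently the composition $\mathcal{F}\hookrightarrow\mathcal{B}\xrightarrow{\beta}\mathcal{C}$ lands entirely in the summand $\op3(a_2)\subset\mathcal{C}$, giving a morphism $\bar\beta:\mathcal{F}\to\op3(a_2)$. Put $\mathcal{K}_0:=\ker(\bar\beta)$; this is a rank-2 subsheaf of $\ker\beta$ whose first Chern class, computed from $0\to\mathcal{K}_0\to\mathcal{F}\to\op3(a_2)$, equals $b_1+b_2+b_3-a_2$, non-negative by hypothesis (2).

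Next I would show that $\mathcal{K}_0$ contains a copy of $\op3(-a_1)$ coming from $\alpha(\mathcal{A})$. The dual vanishing $\Hom(\op3(-a_1),\op3(-b_j))=H^0(\op3(a_1-b_j))=0$ holds for each summand $\op3(-b_j)$ of the symmetric Horrocks form of $\mathcal{B}$ (Lemma 2) with $b_j>a_1$. This forces $\alpha$ restricted to each $\op3(-a_1)$-summand of $\mathcal{A}$ to project trivially onto those negative summands of $B$, and together with $\beta\circ\alpha=0$ places the image $\alpha(\op3(-a_1))$ inside $\mathcal{K}_0$. The quotient $\mathcal{L}:=\mathcal{K}_0/\alpha(\op3(-a_1))$ then embeds in $\mathcal{E}=\ker\beta/\Image\alpha$ as a rank-1 torsion-free subsheaf with
\begin{equation*}
c_1(\mathcal{L}) \;=\; (b_1+b_2+b_3-a_2) + a_1 \;\geq\; a_1 \;\geq\; 0,
\end{equation*}
which contradicts stability and finishes the argument.

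The main obstacle is the inclusion $\alpha(\op3(-a_1))\subset\mathcal{K}_0$. For $g=1$ it is automatic, since then the symmetric form of $\mathcal{B}$ in Lemma 2 forces $B=\bigoplus_{j=1}^{3}\op3(-b_j)$ with each $b_j>a_1$; for $g\geq 2$ the bundle $B$ may also contain positive-degree summands which can in principle receive non-zero maps from $\op3(-a_1)$, so one must use the relation $\beta\circ\alpha=0$ together with the minimality of the monad to isolate at least one $\op3(-a_1)$-summand of $\mathcal{A}$ with the desired property.
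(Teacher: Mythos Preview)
Your kernel $\mathcal{K}_0=\ker\bar\beta$ is exactly the sheaf the paper uses (there called $K''$), and the inequality $c_1(\mathcal{K}_0)=b_1+b_2+b_3-a_2\ge0$ is indeed the heart of the matter. But the route you take from there has two genuine gaps. The first you already name: for $g\ge2$ there is no reason any summand $\op3(-a_1)$ of $\mathcal{A}$ should have its $\alpha$-image contained in $\mathcal{F}=\bigoplus_i\op3(b_i)$; the relation $\beta\circ\alpha=0$ only gives $\alpha(\op3(-a_1))\subset\ker\beta$, and minimality says nothing about the components of $\alpha$ landing in the non-negative summands of $B$. The second gap you do not flag: even when $\alpha(\op3(-a_1))\subset\mathcal{K}_0$, the induced map $\mathcal{K}_0/\alpha(\op3(-a_1))\to\mathcal{E}$ is injective only if $\mathcal{K}_0\cap\Image\alpha$ equals $\alpha(\op3(-a_1))$, which you never check. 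A priori this intersection could have rank $2$, in which case the image of $\mathcal{K}_0$ in $\mathcal{E}$ is zero and no destabilising line is produced at all.

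The paper sidesteps both problems by not attempting to manufacture a rank-$1$ subsheaf. Instead it argues that $K''=\mathcal{K}_0$ is itself a \emph{stable} rank-$2$ reflexive sheaf with $c_1(K'')\ge c_1(\mathcal{E})=0$. Assuming $\mathcal{E}$ stable, the general principle that a non-zero morphism between stable sheaves must go from smaller to larger slope (invoked via \cite[Proposition~10]{MF2022}) forces the composite $K''\hookrightarrow\ker\beta\twoheadrightarrow\mathcal{E}$ to vanish. Hence the inclusion $K''\hookrightarrow\ker\beta$ factors through $\Image\alpha\cong\mathcal{A}=\op3(-a_2)\oplus g\cdot\op3(-a_1)$; but Serre duality together with the stability of $K''$ gives $\Hom(K'',\mathcal{A})=0$, and this contradiction finishes the proof. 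The missing ingredient in your approach is precisely the stability of $K''$ and the ensuing slope argument, which replaces the uncontrolled intersection $\mathcal{K}_0\cap\Image\alpha$ by a clean vanishing of $\Hom$.
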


\begin{proof}
Let's consider $K=\ker\beta$ and $K''=\ker\psi$. If $\EE$ is stable and we write the morphism $\beta$  of the monad as follows
$$\beta=\left(
\begin{array}{cc}
    \psi & \eta \\
   0  & \phi
\end{array}
\right),$$
we get the commutative diagram  
$$\xymatrix{
 K''\ar@{^{(}->}[d]\ar[r]& \displaystyle\bigoplus_{i=1}^3\op3(b_i) \ar@{^{(}->}[d]\ar[r]^{\psi} & \op3(a_2)\ar@{^{(}->}[d] \\
 K\ar[r]\ar[d]& \displaystyle\bigoplus_{i=1}^3\op3(b_i)\oplus B \ar[r]^{\beta} \ar[d]& \op3(a_2)\oplus g\cdot\op3(a_1)\ar[d].\\
 K'\ar[r]&B\ar@{->>}[r]^{\phi}& g\cdot\op3(a_1).\\
}$$
Since $K''\simeq\ker\{K\longrightarrow K'\}$ with $K, K'$ locally free sheaves and $\phi$ is surjective, follows that 
$K''$ is a stable reflexive sheaf of rank two such that $c_1(K'')=b_1+b_2+b_3-a_2\geq0=c_1(\EE)$. From 
\cite[Proposition 10]{MF2022}, the morphism $K''\longrightarrow K\longrightarrow\EE$ is 
zero and so it can be extended to a morphism $K''\longrightarrow\op3(-a_2)\oplus g\cdot\op3(-a_1)$ which is a contradiction because
$$\begin{array}{c}
\Hom(K'',\op3(-a_2)\oplus g\cdot\op3(-a_1))\simeq {H^3(\mathbb{P}^3, K''(a_2-4))}^{*}\oplus g{H^3(\mathbb{P}^3, K''(a_1-4))}^{*}\\
\simeq H^0(K''^{*}(-a_2))+gH^0(K''^{*}(-a_1))=0,\\
\end{array}$$
since $K''$ is a stable reflexive sheaf with $K''^{*}\simeq K''(a_2-b_1-b_2-b_3)$.
\end{proof}
\begin{prop}
Let us consider the possible positive minimal monads in blue as listed above:  $(\i) \boldsymbol{a}=3, 1^4$ and 
$\boldsymbol{b}=1^4, 0^4$; $(\ii) \boldsymbol{a}=4, 1$ and 
$\boldsymbol{b}=2^2, 0^2$; $(\iii) \boldsymbol{a}=3, 2^2$ and $\boldsymbol{b}=2^2, 0^4$; $(\iv) \boldsymbol{a}=3, 1^3$ and $\boldsymbol{b}=1^3, 0^4$; $(\v) \boldsymbol{a}=4, 1^2$ and $\boldsymbol{b}=2^2,1, 0^2$; If one of these monad 
possibilities exists and $\EE$ is its cohomology, then $\EE$ is not a stable bundle. 
\end{prop}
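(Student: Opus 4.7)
Proof proposal. For each of the five cases (i)--(v), the plan is to extract from the monad a destabilizing sub-sheaf $K_\psi$ of $\EE$, in the same spirit as Lemma \ref{nonstable22}. In every case I choose a direct summand $M$ of the middle of the form $\op3(b)^m$ and let $\psi \colon M \to \op3(a_2)$ be the restriction of $\beta$ to the highest-degree summand of $C$; the five choices of $M$ are
$$
\op3(1)^4,\quad \op3(2)^2,\quad \op3(2)^2,\quad \op3(1)^3,\quad \op3(2)^2
$$
for (i)--(v) respectively, with $a_2\in\{3,4\}$ chosen accordingly. I set $K_\psi=\ker\psi$ and note that $c_1(K_\psi)=bm-a_2\ge 0$ in every case.

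The first key step is to verify that $K_\psi\subset K=\ker\beta$, which reduces to showing that the component $M\to g\cdot\op3(a_1)$ of $\beta$ vanishes. This happens for one of two reasons. When $b>a_1$ (cases (ii) and (v), where $b=2$ and $a_1=1$) the Hom-sheaf $\Hom(\op3(b),\op3(a_1))=H^0(\op3(a_1-b))$ is already zero. When $b=a_1$ (cases (i), (iii), (iv)) the entries of this block would be nonzero constants and are forbidden by the minimality of the monad; hence the block vanishes here as well.

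With $K_\psi\subset K$ in hand, the composition $K_\psi\hookrightarrow K\twoheadrightarrow\EE$ is a well-defined morphism, and the crux is to show it is nonzero. For this I would compute $\Hom(K_\psi,A)=0$ by applying $\Hom(-,\op3(-a_i))$ to the defining sequence $0\to K_\psi\to M\to \op3(a_2)\to 0$ and noting that $\Hom(M,\op3(-a_i))$ and $\ext^1(\op3(a_2),\op3(-a_i))=H^1(\op3(-a_2-a_i))$ both vanish from the standard cohomology of line bundles on $\P^3$ for the numerical data at hand. A zero composition $K_\psi\to\EE$ would then force $K_\psi\hookrightarrow A$, contradicting this Hom-vanishing; hence $K_\psi$ maps nontrivially to $\EE$. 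In cases (i), (ii), (iii) and (v) a Chern-class count -- using $c_1(K_\psi)\ge 0$ together with the fact that every subsheaf of $A$ has strictly negative $c_1$ -- shows that the image contains a line subbundle $\op3(k)\hookrightarrow\EE$ with $k\ge 0$, which destabilizes $\EE$.

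The hard part is case (iv), where $K_\psi$ is the stable rank-$2$ Koszul-type bundle with $c_1=0$ and $c_2=3\ne 9=c_2(\EE)$; here $K_\psi$ and $\EE$ are non-isomorphic stable sheaves of equal slope, so \cite[Proposition 10]{MF2022} gives $\Hom(K_\psi,\EE)=0$, which is the final contradiction. The main obstacle is thus to verify the $\Hom$-vanishing into $A$ uniformly across the five cases and, in the boundary case (iv), to exploit the $c_2$-discrepancy via the stable-non-isomorphic $\Hom$-vanishing principle rather than a strict Chern-class inequality.
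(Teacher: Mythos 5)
Your proposal follows the same basic strategy as the paper: in each case you isolate the block $\psi$ of $\beta$ from the top summands $M$ of the middle term to $\op3(a_2)$, check via minimality or degree reasons that $\ker\psi$ lands inside $\ker\beta$, and play the resulting map $\ker\psi\to\EE$ against the vanishing of $\Hom(\ker\psi,\mathcal{A})$. The execution differs in two ways. First, the logic is reversed: the paper assumes $\EE$ stable, kills the map $K'\to\EE$ by comparing slopes of the \emph{stable} sheaves $K'$ (stability from \cite[Remark 1.2.6]{Oko80}) and $\EE$ (via \cite[Proposition 10]{MF2022}), and then contradicts $\Hom(K',\mathcal{A})=0$; you instead prove $\Hom(K_\psi,\mathcal{A})=0$ directly from the long exact sequence (which is clean and correct), deduce that $K_\psi\to\EE$ is nonzero, and then destabilize $\EE$ by bounding $c_1$ of the kernel of $K_\psi\to\EE$ as a subsheaf of $\mathcal{A}$. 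This last step is a genuine simplification in cases (i), (ii), (iii), (v): since a rank-$r$ subsheaf of a sum of line bundles of degree $\le-1$ has $c_1\le-r$, the image of $K_\psi$ in $\EE$ has positive $c_1$ whether it has rank $1$ or $2$, and no stability of $K_\psi$ is needed at all. Second, your treatment of case (iv) is actually more careful than the paper's: there $\mu(K_\psi)=0=\mu(\EE)$, so the paper's stated justification ``$\mu(K')>\mu(\EE)$'' does not apply verbatim, and the only sub-case your Chern count misses is a full-rank injection $K_\psi\hookrightarrow\EE$. You rightly single this out, but the principle you invoke --- ``non-isomorphic stable sheaves of equal slope have no nonzero homomorphisms'' --- is not true for $\mu$-stability in general; what closes the gap is the comparison of reduced Hilbert polynomials (equivalently, that a full-rank subsheaf with the same $c_1$ forces $c_2(K_\psi)\ge c_2(\EE)$, contradicting $3<9$), and for that step you should still justify stability of $K_\psi$ (e.g.\ by the same \cite[Remark 1.2.6]{Oko80}) or argue the $c_2$ inequality directly, which needs $\psi$ surjective --- an assumption both you and the paper leave implicit. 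With those two small repairs your argument is complete and, on balance, more self-contained than the published one.
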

\begin{proof}
We will prove that the cohomology of the first possible of minimal Horrocks monad is an unstable bundle. Let us consider the existence of the minimal Horrocks monad 
$$\op3(-3)\oplus 4\cdot\op3(-1) \stackrel{\alpha}{\longrightarrow}
4\cdot\op3(1)\oplus 4\cdot\op3\oplus4\cdot\op3(-1) \stackrel{\beta}{\longrightarrow}
\op3(3)\oplus 4\cdot\op3(1),$$
 and $\EE$ its cohomology. We suppose, by contradiction, that $\EE$ is a stable 
 bundle. We have the commutative diagram
 $$\xymatrix{
 K'\ar@{^{(}->}[d]\ar[r]& 4\cdot\op3(1) \ar@{^{(}->}[d]\ar[r]^{\beta'} & \op3(3)\ar@{^{(}->}[d] \\
 K\ar[r]\ar[d]& 4\cdot\op3(1)\oplus 4\cdot\op3\oplus4\cdot\op3(-1)\ar[r] \ar[d]& 4\cdot\op3(1)\oplus\op3(3)\ar[d].\\
 K''\ar[r]&4\cdot\op3\oplus4\cdot\op3(-1)\ar@{->>}[r]^{\beta''}& 4\cdot\op3(1).\\
}$$
The morphism $\beta':4\op3(1)\rightarrow\op3(3)$ is surjective which implies $K'$ be a rank 3 reflexive 
sheaf with $\mu(K')=\frac{1}{3}$. Furthermore, the reflexive sheaf $K'$ is 
stable c.f. \cite[Remark 1.2.6]{Oko80} and 
$$H^0(\mathbb{P}^3, K'(-1))=H^0(\mathbb{P}^3, {(K')}^{\vee})=0.$$
On the other hand, we have the commutative diagram
$$\xymatrix{
 & K' \ar@{^{(}->}[d]\ar@{-->}[rd]^{\Psi}& \\
 \op3(-3)\oplus4\cdot\op3(-1)\ar[r]& K \ar[r]& \EE,\\
}$$
where we obtain the morphism $\Psi: K'\rightarrow K\rightarrow\EE$ which is zero c.f. 
\cite[Proposition 10]{MF2022} because $\mu(K')>\mu(\EE)=0$ and this $\Psi$ can be 
extended to a morphism $K'\rightarrow\op3(-3)\oplus4\cdot\op3(-1)$ that is a 
contradiction, since $K'$ is stable implies 
$$\Hom(K',\op3(-3)\oplus4\cdot\op3(-1))=0.$$

In the second case we consider the morphism $\beta':2\cdot\op3(2)\rightarrow\op3(4)$ while in the third case we take 
$\beta':2\cdot\op3(2)\rightarrow\op3(3)$ and so we repeat the above argument. Finally, in the case 
$(iv)$ we consider $\beta':3\cdot\op3(1)\rightarrow\op3(3)$ and in the case $(v)$ the 
morfism $\beta':2\cdot\op3(2)\oplus\op3(1)\rightarrow\op3(4)$. 
\end{proof}
\begin{prop}\label{nosection}
 If the three possible positive minimal Horrocks monads in red exist, then its cohomology is an unstable bundle.   
\end{prop}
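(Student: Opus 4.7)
The plan is to show, in each of the three red monads, that stability of $\EE$ would force the existence of a nonzero morphism $\op3(d)\to\EE$ with $d>0$, contradicting the stability inequality $H^0(\EE(k))=0$ for $k\leq 0$ of a rank $2$ bundle on $\mathbb{P}^3$ with $c_1=0$.

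The starting observation is the numerical matching $d:=b_{s+1}=a_s$ in all three red monads: in the first case, $\boldsymbol{a}=2^4$ and $\boldsymbol{b}=2,1^3,0^2$, so $d=2$; in the second, $\boldsymbol{a}=3^2$ and $\boldsymbol{b}=3,0^4$, so $d=3$; and in the third, $\boldsymbol{a}=3^2,2$ and $\boldsymbol{b}=3,2,0^4$, so again $d=3$. Hence $\op3(d)$ appears as a direct summand of $B$, and every summand of $C$ has degree $\leq d$. Restricting $\beta$ to this $\op3(d)$, its components into the summands $\op3(a_s)\subseteq C$ of the same degree are constants, which the minimality condition on $\beta$ forces to vanish, while its components into summands $\op3(e)\subseteq C$ with $e<d$ vanish because $H^0(\op3(e-d))=0$. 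Therefore $\op3(d)$ embeds as a subsheaf of $K:=\ker\beta$.

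Suppose now for contradiction that $\EE$ is stable. Then $\Hom(\op3(d),\EE)=H^0(\EE(-d))=0$, so the composite $\op3(d)\hookrightarrow K\twoheadrightarrow\EE$ must vanish, forcing $\op3(d)\subseteq\im\alpha$. Because $\alpha$ is locally left invertible, it induces an isomorphism of sheaves $A\simto\im\alpha$, and we obtain a nonzero morphism $\op3(d)\to A$. But in each red case $A$ is a direct sum of line bundles $\op3(-a_i)$ with $a_i\geq 2$, whence
\[
\Hom(\op3(d),A)=\bigoplus_i H^0(\op3(-a_i-d))=0,
\]
which is the desired contradiction.

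No substantial obstacle is foreseen: the argument is uniform across the three red monads, and the only input required is the tabulated form of $\boldsymbol{a}$ and $\boldsymbol{b}$ to pin down the summand $\op3(d)$. What distinguishes the red cases from the blue ones of the preceding proposition is precisely the equality $b_{s+1}=a_s$, which makes the top-degree component of $\beta$ a constant and hence killed by minimality; in the blue cases $b_{s+1}<a_s$, so this simpler argument fails and one has to pass instead to the rank-$3$ reflexive kernel of a map $\op3(b_{s+1})^{\oplus k}\to\op3(a_s)$, as in the preceding argument.
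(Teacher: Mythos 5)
Your proof is correct and rests on the same key observation as the paper's: minimality (together with degree reasons for the lower-degree targets) forces the component of $\beta$ emanating from the top summand $\op3(d)$ of the middle term to vanish, so that summand lies in $\ker\beta$ and produces a forbidden section of a twist of $\EE$. The paper phrases this by exhibiting an explicit section such as ${(x^2,0,\dots,0)}^t\in H^0(\ker\beta)\cong H^0(\EE)$, whereas you rule out the factorization through $\mathcal{A}$ via $\Hom(\op3(d),\mathcal{A})=0$; the two versions are interchangeable.
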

\begin{proof}
    The possible positive minimal monads in red are: $\boldsymbol{a}=2^4, \boldsymbol{b}=2, 1^3, 0^2$; $\boldsymbol{a}=3^2, \boldsymbol{b}=3,0^4$; $\boldsymbol{a}=3^2,2, \boldsymbol{b}=3,2, 0^4$. We suppose the first possible minimal monad exists and its cohomology $\EE$ is a stable bundle. We rewrite this monad as
    $$4\cdot\op3(-2)\stackrel{\alpha}{\longrightarrow}
\op3(2)\oplus 3\cdot\op3(1)\oplus2\cdot\op3\oplus\op3(-2)\oplus3\cdot\op3(-1) \stackrel{\beta}{\longrightarrow}
 4\cdot\op3(2),$$
 and from the minimally of the monad the first column of $\beta$ is zero. If we take $K=\ker\beta$ then, for example, 
 $\eta={\left(\begin{array}{cccccccc}x^2& 0 &0& 0& 0 &0& 0 &0\end{array}\right)}^t\in H^0(K)=H^0(\EE)$ which is 
 contradiction. With a similar argument,t we show that the cohomology of the other two monads is unstable.
 \end{proof}
\section{Existence of minimal Horrocks monads}\label{section4}
The goal of this section is to list all the existing minimal monads whose cohomology is a stable rank 2 bundle on $\mathbb{P}^3$ with $c_1=0, c_2=9$. From Theorem \ref{teo4} we will conclude that there are no negative monads in this case and there are at most three non-negative minimal monads.
\subsection{Negative and Nonnegative minimal Horrocks monads}\label{sec4}


With the notation of this paper, in \cite{coanda2024}, Iustin Coanda proved the following assertions.
\begin{Theorem}[Theorem 2.7]\label{teo4}
  \begin{enumerate}
      \item $\rho(i)\leq s(-i-1)-1$, for $-m\leq i\leq m-1$.
      \item If $\rho(i)=s(-i-1)-1$, for some $i$ with $-m\leq i\leq -2$, then $s(j)=1$ for $-i\leq j\leq m$ 
      and there is a plane $H_0\subset\mathbb{P}^3$ such that $H^0(E_{H_0}(-m))\neq0$.
      \item\label{3} $\rho(i)\leq\max(s(-i-1)-2,0)$, for $i\geq0$.
  \end{enumerate}  
\end{Theorem}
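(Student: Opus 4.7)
The plan is to prove all three bounds by restricting $\EE$ to a plane $H \subset \P^3$ and comparing the resulting cohomology exact sequence with the spectrum identities of Section \ref{sub2}. For part (1), fix $i$ with $-m \leq i \leq m-1$ and choose a general plane $H \subset \P^3$ with defining linear form $h$. The short exact sequence
$$0 \to \EE(i-1) \xrightarrow{\cdot h} \EE(i) \to \EE_H(i) \to 0$$
induces on $H^1$ an injection of the cokernel of multiplication by $h$ into $H^1(\EE_H(i))$. Since $\rho(i) = \dim(M_i / S_1 \cdot M_{i-1})$ is bounded above by the cokernel of multiplication by any single linear form, one obtains $\rho(i) \leq h^1(\EE_H(i))$.

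Next I would compute $h^1(\EE_H(i))$ in terms of the spectrum. For a general $H$, $\EE_H$ is a semistable rank-$2$ bundle on $\P^2$ with $c_1=0$, $c_2 = m$. Combining $\chi(\EE_H(i)) = \chi(\EE(i)) - \chi(\EE(i-1))$ with the spectrum descriptions of $h^1(\EE(l))$ and $h^2(\EE(l))$ recorded in Section \ref{sub2}, one extracts $h^1(\EE_H(i))$ in terms of the spectrum multiplicities $s(k)$. Stability of $\EE_H$ (for general $H$) forces $H^0(\EE_H) = 0$, shaving off one dimension and yielding $h^1(\EE_H(i)) \leq s(-i-1) - 1$. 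For part (2), the equality case $\rho(i) = s(-i-1) - 1$ forces every inequality above to be tight; tracking these tightness conditions shows that some special plane $H_0$ must carry a nonzero section of $\EE_{H_0}(-m)$, while the rigid spectrum bookkeeping collapses the intermediate multiplicities and forces $s(j) = 1$ for $-i \leq j \leq m$.

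For part (3), the hypothesis $i \geq 0$ allows use of the stronger constraint $H^0(\EE(i-1)) = 0$ coming from stability of $\EE$ on $\P^3$ (not merely of its hyperplane restriction), which interacts with the multiplication ladder on $M$ to yield an additional drop of one in the cokernel dimension, giving $\rho(i) \leq \max(s(-i-1) - 2, 0)$, where the $\max$ clause handles the trivial cases $s(-i-1) \leq 1$. The main obstacle will be the precise step in part (1) of computing $h^1(\EE_H(i))$: this requires careful bookkeeping of the $H^2(\EE(i-1))$ correction term in the restriction sequence and a sharp understanding of how stability controls the cokernel dimension, in particular distinguishing the regimes $i \leq -1$ and $i \geq 0$, since it is exactly this distinction that produces the $-1$ versus $-2$ gap between parts (1) and (3).
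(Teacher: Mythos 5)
First, a point of reference: the paper does not prove this statement at all --- it is quoted verbatim from Coand\u{a} \cite{coanda2024} (``in \cite{coanda2024}, Iustin Coand\u{a} proved the following assertions''), so there is no internal proof to compare against. Your proposal must therefore stand on its own, and it does not: the central step fails.

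The fatal gap is the claimed inequality $h^1(\EE_H(i))\leq s(-i-1)-1$. This is false, and the strategy of bounding $\rho(i)$ by $h^1(\EE_H(i))$ for a general plane $H$ cannot yield the theorem. Concretely, take $\EE$ stable with $c_1=0$, $c_2=3$ and spectrum $\{-1,0,1\}$ (a generalized null correlation bundle), and $i=-1$. Then $s(-i-1)-1=s(0)-1=0$, while Riemann--Roch on $\mathbb{P}^2$ gives $\chi(\EE_H(-1))=(i+1)(i+2)-c_2=-3$; since $h^0(\EE_H(-1))=h^2(\EE_H(-1))=0$ for general $H$, one gets $h^1(\EE_H(-1))=3$. (The theorem is nevertheless correct here: $\rho(-1)=0$.) The structural reason your bound is too weak is visible in the restriction sequence itself: $H^1(\EE_H(i))$ is an extension of $\ker\bigl(H^2(\EE(i-1))\to H^2(\EE(i))\bigr)$ by $\coker\bigl(H^1(\EE(i-1))\to H^1(\EE(i))\bigr)$, and already the cokernel piece has dimension at least $h^1(\EE(i))-h^1(\EE(i-1))=\#\{k_j\geq -i-1\}=\sum_{k\geq -i-1}s(k)$ by the identity \eqref{S4}, which exceeds $s(-i-1)$ whenever the spectrum has entries above $-i-1$; the $H^2$ piece adds a symmetric contribution. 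In short, passing to a single general hyperplane section loses exactly the information that makes the bound $s(-i-1)-1$ sharp; the actual arguments (Hartshorne--Rao for the range $i\leq -1$, Coand\u{a} for the refinement) require the graded module structure of $H^1_*(\EE)$ and $H^1_*(\EE_H)$ and a finer analysis of special planes, not just a dimension count on one restriction.

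The other two parts inherit the same problem and add new ones. For part (3), stability of $\EE$ gives $H^0(\EE)=0$, hence $H^0(\EE(i-1))=0$ only for $i\leq 1$; for $i\geq 2$ the vanishing you invoke is generally false, and no mechanism is offered for how such a vanishing would lower the number of minimal generators by one. This item is precisely the new content of Coand\u{a}'s theorem beyond \cite[Proposition 3.1]{HR91} and cannot be obtained by a one-line strengthening of part (1). Part (2) as written (``tracking these tightness conditions shows that some special plane $H_0$ must carry a nonzero section'') is a restatement of the conclusion, not an argument: nothing in your setup singles out a plane $H_0$ or produces a section of $\EE_{H_0}(-m)$.
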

By applying the item \ref{3} of Theorem \ref{teo4} to each spectrum of the Table 
\ref{spectra} we can verify that there are no negative minimal Horrocks monad. Furthermore,
if we apply the item \ref{3} again with $i=0$ to each spectrum of the table \ref{spectra} we see that
the spectra $\mathcal{X}_7^9=\{-1^4,0,1^4\}$ and $\mathcal{X}_8^9=\{-2,-1^3,0,1^3,2\}$ 
provide possibilities of no-negative minimal monads. 
\begin{prop}
 Let $\mathcal{E}$ be a stable rank 2 bubdle on $\mathbb{P}^3$ with spectrum $\mathcal{X}_7^9=\{-1^4,0,1^4\}$. The only 
 no-negative minimal Horrocks monad whose cohomology has spectrum $\mathcal{X}_7^9$ is
 $$\op3(-2)^{\oplus4}\oplus\op3^{\oplus2}\stackrel{\alpha}{\longrightarrow}
\op3(1)^{\oplus7}\oplus\op3(-1)^{\oplus7} \stackrel{\beta}{\longrightarrow}
 \op3(2)^{\oplus4}\oplus\op3^{\oplus2}.$$
\end{prop}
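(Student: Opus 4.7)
The plan is to enumerate all candidate non-negative minimal monads consistent with the spectrum $\mathcal{X}_7^9$ and the Chern class formula, and then eliminate all but one by using stability of $\EE$.

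First, I would use Theorem \ref{possibles} with $k=1$, $s(1)=4$, $s(0)=1$ to obtain $\rho(-2)=4$ and $\rho(-1)=0$. Theorem \ref{teo4}(3) with $i=0$ gives the bound $\rho(0)\leq s(-1)-2=2$. Since the positive case $\rho(0)=0$ has already been ruled out by Proposition \ref{nosection} (the red tuple $\boldsymbol{b}=(2,1^3,0^2)$ under $\mathcal{X}_7^9$), I restrict to $\rho(0)\in\{1,2\}$. Thus $\boldsymbol{a}=(0^{\rho(0)},2^4)$ with $s=4+\rho(0)$, and the formula \eqref{c_2} forces $\sum b_j^2=7$ distributed over $s+1$ nonnegative integer summands. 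A direct enumeration yields exactly three candidate tuples: $\boldsymbol{b}=(0,0,1,1,1,2)$ when $\rho(0)=1$, and either $\boldsymbol{b}=(0,0,0,1,1,1,2)$ or $\boldsymbol{b}=1^7$ when $\rho(0)=2$.

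The main step is to rule out the two candidates in which $\boldsymbol{b}$ contains a summand equal to $2$. I plan to show that whenever $\op3(2)$ appears as a direct summand of $\mathcal{B}$, the bundle $\EE$ acquires a nonzero section of $\EE(-2)$, contradicting stability. The restriction of $\beta$ to that summand $\op3(2)\subset\mathcal{B}$ lands in $\mathcal{C}=\op3(2)^{\oplus 4}\oplus\op3^{\oplus\rho(0)}$: its component into $\op3(2)^{\oplus 4}$ is a $4\times 1$ matrix of elements of $H^0(\op3(0))=\mathbf{k}$, all of which must vanish by minimality of the monad, while its component into $\op3^{\oplus\rho(0)}$ sits in $H^0(\op3(-2))=0$. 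Hence $\op3(2)\hookrightarrow\ker\beta=K$, so after twisting by $-2$ the tautological section of $\op3$ produces a nonzero $\sigma\in H^0(K(-2))$. Since $H^0(\mathcal{A}(-2))=H^0(\op3(-4))^{\oplus 4}\oplus H^0(\op3(-2))^{\oplus\rho(0)}=0$, the long exact sequence associated to $0\to\mathcal{A}(-2)\to K(-2)\to\EE(-2)\to 0$ shows that $H^0(K(-2))\hookrightarrow H^0(\EE(-2))$, and therefore $h^0(\EE(-2))\geq 1$, contradicting stability of $\EE$.

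With both candidates containing a $b_j=2$ eliminated, the only surviving non-negative minimal Horrocks monad has $\rho(0)=2$ and $\boldsymbol{b}=1^7$, which is exactly the form displayed in the proposition. The step I expect to be the main obstacle is identifying this simple line-bundle mechanism: the coincidence that the largest $a_i$ equals the forbidden $b_j$ is what triggers minimality and forces a direct factor $\op3(2)\subset K$, bypassing the more elaborate rank-two reflexive-subsheaf construction employed in Lemma \ref{nonstable22}.
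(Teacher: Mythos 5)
Your enumeration and your elimination of the two candidates with a summand $b_j=2$ are correct, and they follow essentially the same route as the paper: the paper disposes of those candidates by ``repeating the argument of Proposition \ref{nosection}'', which is exactly your observation that minimality kills the constant entries of $\beta$ on the $\op3(2)$ summand of $\mathcal{B}$, forcing $\op3(2)\hookrightarrow\ker\beta$ and hence a nonzero section of $\EE(-2)$ (equivalently of $\EE$), contradicting stability. Your bookkeeping ($\rho(-2)=4$, $\rho(0)\le 2$, $\sum b_j^2=7$ over $s+1$ slots) also matches the paper's.

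The gap is that you prove only half of the statement. Showing that every other candidate has unstable cohomology establishes that the displayed monad is the \emph{only possible} non-negative minimal Horrocks monad with spectrum $\mathcal{X}_7^9$; it does not establish that this monad is actually realized by a stable bundle with that spectrum, which is part of what ``the only such monad \emph{is}'' asserts. The paper closes this by an existence construction: it takes a smooth curve $X$ of bidegree $(1,4)$ on a nonsingular quadric $Q$, uses the sequence $0\to\mathcal{O}_Q(-1,-4)\to\mathcal{O}_Q\to\mathcal{O}_X\to0$ to compute $h^0(\mathcal{I}_X(1))=2$, and applies the Serre correspondence of Subsection \ref{serre} to produce a stable bundle $\EE$ with $c_1=0$, $c_2=9$, spectrum $\mathcal{X}_7^9$, and the stated values $\rho(-2)=4$, $\rho(0)=2$, $\boldsymbol{b}=1^7$. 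You should add this (or some other) realization argument; without it the proposition could in principle be vacuous for the surviving candidate.
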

\begin{proof}
For $\mathcal{X}_7^9$ we have $\rho(-2)=4$ 
and $\rho(0)\leq2$. From Equation \eqref{c_2}, if $\rho(0)=1$ then
$$\displaystyle\sum_{i=1}^{6}b_i^2=7\Rightarrow b_1=2, b_2=b_3=b_4=1, b_5=b_6=0.$$
If this minimal monad exists, then by repeating the argument of the Proposition \ref{nosection} we conclude that 
its cohomology is not a stable bundle. If $\rho(0)=2$ then
$$\displaystyle\sum_{i=1}^{7}b_i^2=7\Rightarrow b_1=2, b_2=b_3=b_4=1, b_5=b_6=b_7=0 \mbox{ or } b_i=1, i=1,\cdots,7.$$
If the first possible of minimal monad exists, then its cohomology $\mathcal{E}$ is not a stable bundle. For the values
$\rho(-2)=4, \rho(0)=2$ and $b_i=1, i=1,\cdots,7$ we consider a smooth curve $X$ of bidegree
$(1,4)$ on a nonsingular quadric surface $Q$ and from the exact sequence
$$0\rightarrow\mathcal{O}_Q(-1,-4)\rightarrow\mathcal{O}_Q\rightarrow\mathcal{O}_X\rightarrow0,$$
we have c.f. \cite[Proposition 6.1]{HR91} $h^0(\mathcal{I}_X(1))=2$. From Hartshorne-Serre correspondence in Subsection
\ref{serre}, the bundle $\mathcal{E}$ corresponding  to $X$ is stable with $c_1=0, c_2=9$ and 
$\mathcal{X}(\mathcal{E})=\mathcal{X}_7^9$.
\end{proof}
From item \ref{3} of Theorem \ref{teo4}, the spectrum $\mathcal{X}_8^9=\{-2,-1^3,0,1^3,2\}$ gives
$\rho(-3)=1, \rho(-2)\in\{1,2\}$ and $\rho(0)\leq1$.
\begin{itemize}
    \item $\rho(-3)=1, \rho(-2)=1$ and $\rho(0)=1$. In this case, 
    $$\displaystyle\sum_{i=1}^{4}b_i^2=4\Rightarrow b_1=2, b_2=b_3=b_4=0 \mbox{ or } b_i=1, i=1,2,3,4.$$
    \item $\rho(-3)=1, \rho(-2)=2$ and $\rho(0)=1$. In this case, 
    $$\displaystyle\sum_{i=1}^{5}b_i^2=8\Rightarrow b_1=b_2=2, b_3=b_4=b_5=0 \mbox{ or } b_1=2, b_2=b_3=b_4=b_5=1 .$$
\end{itemize}
The 2 possibilities of minimal monads $b_1=2, b_2=b_3=b_4=0$ and $b_1=b_2=2, b_3=b_4=b_5=0$ are
eliminated from \cite[Lemma 3.3]{HR91}. 

\subsection{Positive minimal monads}
The goal of this subsection is to list all positive minimal Horrocks monads whose cohomology 
is a stable rank two vector bundle on $\mathbb{P}^3$ with $c_2=9, c_1=0$. We will enunciate in this section two methods 
of building minimal Horrocks monads whose cohomology is a stable bunble with even determinant 
and $c_2=9$, see also \cite{HR91}. Let us
denote by (a) the first form of construction of monads. To recall this type of construction 
of monads its necessary to consider a locally complete intersection 
curve $X$ in $\mathbb{P}^3$ of degree $d$ such that each connected component has no global section 
and the rank 2 sheaf $\mathcal{N}_X\otimes\omega_X(2)$ has a nowhere vanish section 
$s$ (this is true by \cite[Proposition 2.8]{HR91}). Then we consider the curve $Y$ with a multiplicity 
2 structure on $X$ given by the kernel the the morphism $\mathcal{I}_X\longrightarrow\omega_X(2)$ such that we have the exact sequence
\begin{equation}\label{eq2.3}
0\longrightarrow\mathcal{I}_Y\longrightarrow\mathcal{I}_X\longrightarrow\omega_X(2)\longrightarrow0.
\end{equation}
By Ferrand's Theorem we get $\omega_Y=\mathcal{O}_Y(-2)$ and from Hartshorne-Serre 
corresponde, see \cite[Theorem 4.1]{H80}, $Y$ is a zero scheme of a section of $\EE(1)$ where $\EE$ is a 
stable bundle with $c_1=0, c_2=2d-1$ and there is the exact sequence
\begin{equation}\label{eq2.4}
0\longrightarrow\mathcal{O}\longrightarrow\EE(1)\longrightarrow\mathcal{I}_Y(2)\longrightarrow0.
\end{equation}
The second form (b) of construction of monads consists of the application of the following Lemma 
which also can be found in \cite[Lemma 4.8]{HR91}.
\begin{Lemma}\label{lema48}
Let $(E,\sigma)$ be a pair consisting of a stable rank 2 vector bundle $E$ with $c_1(E)=-1$ and $c_2(E)=n$ 
and a section $\sigma\in H^0(E(r))$ with $r>0$ such that $X:=(\sigma)_0$ is a curve. If $C$ is a complete 
intersection curve of type $(a,b)$ disjoint from $X$ satisfying $a+b=2r-1$, then there is a pair 
$(E',\sigma')$ consisting of a stable rank 2 vector bundle $E$ with $c_1(E)=0$ and $c_2(E)=n+ab$ 
and a section $\sigma'\in H^0(E'(r))$ such that $(\sigma')_0=X\sqcup C$. Moreover, if $E$ 
is the cohomology of a minimal monad of the form
$$ \mathbf{M}:\ \ \  \ \mathcal{C} \longrightarrow \mathcal{B} \longrightarrow \mathcal{A}, $$
then $E'$ is the cohomology of a minimal monad of the form
$$ \mathbf{M}':\ \ \op3(-r)\oplus\mathcal{C} \longrightarrow \op3(r-a)\oplus\op3(r-b)\oplus\mathcal{B} \longrightarrow \op3(r)\oplus\mathcal{A}. $$
\end{Lemma}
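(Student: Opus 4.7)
The plan is to proceed in three stages: construct $E'$ from $(E,\sigma)$ and $C$ by the Hartshorne-Serre correspondence, verify stability and Chern classes, and then obtain the minimal monad of $E'$ by grafting a Koszul-type block coming from $C$ onto the monad of $E$.

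For the first two steps, I would apply the Serre correspondence of Subsection \ref{serre} to the disjoint union $Z := X \sqcup C$. Since $X \cap C = \emptyset$, $\omega_Z = \omega_X \oplus \omega_C$, and adjunction on the complete intersection gives $\omega_C \cong \mathcal{O}_C(a+b-4) = \mathcal{O}_C(2r-5)$. The pair $(E,\sigma)$ yields the sequence
$$ 0 \to \mathcal{O}_{\mathbb{P}^3} \to E(r) \to \mathcal{I}_X(2r-1) \to 0, $$
whose extension class is a generating global section of $\omega_X(5-2r)$. Combining it with the natural generator of $\omega_C$ produced by the Koszul complex of $C$ gives a generating global section of the appropriate twist of $\omega_Z$; Serre's construction then yields a rank $2$ bundle $E'$ with $c_1(E')=0$, a section $\sigma' \in H^0(E'(r))$ with $(\sigma')_0 = Z$, and $c_2(E') = n+ab$ by a Chern-class count. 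Stability of $E'$ reduces, via the twisted defining sequence $0 \to \mathcal{O}_{\mathbb{P}^3}(-r) \to E' \to \mathcal{I}_{X \sqcup C}(r) \to 0$, to $H^0(\mathcal{I}_{X\sqcup C}(r)) = 0$, and this in turn follows from the stability of $E$ together with $a,b \geq 1$.

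For the monad description, the Koszul resolution
$$ 0 \to \mathcal{O}_{\mathbb{P}^3}(-a-b) \to \mathcal{O}_{\mathbb{P}^3}(-a) \oplus \mathcal{O}_{\mathbb{P}^3}(-b) \to \mathcal{I}_C \to 0 $$
together with the Mayer-Vietoris sequence
$$ 0 \to \mathcal{I}_{X \sqcup C} \to \mathcal{I}_X \oplus \mathcal{I}_C \to \mathcal{O}_{\mathbb{P}^3} \to 0 $$
lets one splice the minimal free resolution of $\mathcal{I}_X$ (which, through Lemma \ref{lema3} and the following discussion, encodes the monad $\mathcal{C} \to \mathcal{B} \to \mathcal{A}$ of $E$) with the Koszul resolution of $\mathcal{I}_C$, suitably shifted. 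The condition $a+b = 2r-1$ is exactly what makes the Koszul twists line up symmetrically around the middle term, producing the extra summands $\op3(-r)$, $\op3(r-a) \oplus \op3(r-b)$, and $\op3(r)$ of the new monad. The hard part is showing that the grafted complex is \emph{minimal}: one must check that no newly introduced matrix entry reduces to a nonzero constant, which would force a summand of the Koszul block to cancel against a summand of $\mathcal{A}, \mathcal{B}$, or $\mathcal{C}$. This is a cohomological computation in which the disjointness $X \cap C = \emptyset$ is essential, since it is what ensures that every map between the two blocks factors through forms of strictly positive degree.
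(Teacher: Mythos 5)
First, a point of comparison: the paper gives no proof of this lemma at all --- it is imported with the citation \cite[Lemma 4.8]{HR91} --- so there is nothing in the text to check your argument against. Your outline (Serre correspondence applied to $X\sqcup C$, then splicing the Koszul block of $C$ onto the monad of $E$) is indeed the standard route and is essentially what Hartshorne--Rao do.

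However, your central step contains a genuine gap, and it is exactly the point where the numerology of the statement breaks down. To produce $E'$ with $c_1(E')=0$ together with $\sigma'\in H^0(E'(r))$ vanishing on $Z=X\sqcup C$, the Serre correspondence of Subsection \ref{serre} requires $\omega_Z(4-2r)$ to be generated by a global section. On the component $C$ this sheaf is $\omega_C(4-2r)=\mathcal{O}_C(a+b-2r)=\mathcal{O}_C(-1)$, which has \emph{no} nonzero global sections (a complete intersection curve is connected and projectively normal); and on the component $X$ the extension class of $(E,\sigma)$ trivializes $\omega_X(5-2r)$, not $\omega_X(4-2r)$. So there is no ``appropriate twist'' for which your combined section exists, and the assertion that the Koszul twists ``line up'' is false under the stated hypotheses. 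The same failure is visible in the degree count you invoke: $\deg(X\sqcup C)=c_2(E(r))+ab=n+ab+r^2-r$, whereas the zero scheme of a section of $E'(r)$ with $c_1(E')=0$, $c_2(E')=n+ab$ must have degree $n+ab+r^2$; these differ by $r>0$, so no such pair $(E',\sigma')$ can have $(\sigma')_0=X\sqcup C$. The construction you describe does go through verbatim --- including the stability argument, via $H^0(E')\cong H^0(\mathcal{I}_{X\sqcup C}(r))\subseteq H^0(\mathcal{I}_X(r))\cong H^0(E)=0$, and the self-dual block $\op3(-r)\to\op3(r-a)\oplus\op3(a-r)\to\op3(r)$ --- precisely when the hypotheses read $c_1(E)=0$ and $a+b=2r$, which is how the lemma is actually used in Table \ref{table:c2=10} (e.g.\ $r=1$ with $C=P_1$ of type $(1,1)$, or $r=2$ with $C=C_{2,2}$) and is the form of \cite[Lemma 4.8]{HR91}. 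A correct submission should have detected this inconsistency and either proved the corrected statement or explained why the stated one cannot hold; as written, the proof asserts the one claim that is false. Separately, the minimality of the spliced monad is only asserted, not argued, but that is secondary to the issue above.
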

\begin{Remark}
The positive minimal Horrocks monads obtained from type (b) are easily identified once we choose the curve, while
monads found of type (a) are not immediately identified when choosing the curve.
\end{Remark}
\begin{prop}\label{existence1}
The possible positive minimal Horrocks monad with $\boldsymbol{b}=3,1^2$ and $\boldsymbol{a}=4, 2$ exists.
\end{prop}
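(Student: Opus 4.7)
The plan is to construct explicitly a stable rank $2$ bundle $\mathcal{E}$ on $\mathbb{P}^3$ with $c_1(\mathcal{E})=0$ and $c_2(\mathcal{E})=9$ whose minimal Horrocks monad has the prescribed form
$$\op3(-4)\oplus\op3(-2)\longrightarrow\op3(3)\oplus\op3(-3)\oplus\op3(1)^{\oplus 2}\oplus\op3(-1)^{\oplus 2}\longrightarrow\op3(4)\oplus\op3(2).$$
Two tools are available: method $(a)$, via a Ferrand doubling and the Hartshorne--Serre correspondence, and method $(b)$, via Lemma \ref{lema48}. I would attempt method $(a)$ first, because method $(b)$ does not apply in one step: Lemma \ref{lema48} adds middle summands $\op3(r-a)\oplus\op3(r-b)$ with $(r-a)+(r-b)=1$, yet the target middle has no summand of degree $0$, so no single pair $(r,a,b)$ with $a+b=2r-1$ produces the required middle when added to a minimal $c_1=-1$ monad of matching left and right terms.

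Following method $(a)$, I would pick a locally complete intersection curve $X\subset\mathbb{P}^3$ of degree $5$ such that $\omega_X(2)$ admits a nowhere vanishing section (guaranteed by \cite[Proposition~2.8]{HR91}). Forming the Ferrand doubling $Y$ via \eqref{eq2.3} gives $\omega_Y=\mathcal{O}_Y(-2)$, and the Hartshorne--Serre correspondence \eqref{eq2.4} produces a rank $2$ bundle $\mathcal{E}$ fitting in $0\rightarrow\mathcal{O}\rightarrow\mathcal{E}(1)\rightarrow\mathcal{I}_Y(2)\rightarrow 0$ with $c_1(\mathcal{E})=0$ and $c_2(\mathcal{E})=2\deg X-1=9$. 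The curve $X$ must be chosen (for instance as a smooth rational quintic in sufficiently general position, or as a suitable reducible configuration whose components force the right postulation) so that the Rao module $M=H^1_*(\mathcal{E})$ has exactly two minimal generators sitting in degrees dictated by $\boldsymbol{a}=(2,4)$ and six first syzygies in the degrees dictated by $\boldsymbol{b}=(1,1,3)$; by the discussion following \eqref{c_2}, these Betti data identify the minimal Horrocks monad of $\mathcal{E}$. Stability would then follow from $H^0(\mathcal{I}_Y(1))=0$, which by the remark at the end of Section \ref{serre} is equivalent to the stability of $\mathcal{E}$.

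The main obstacle is verifying that such an $X$ can be chosen to yield precisely the required minimal presentation of the Rao module. This reduces to a Hilbert-function and Koszul calculation on $\mathcal{I}_Y$: one must check that the Betti table of the minimal free resolution realizes $\operatorname{rank} F_0=2$ with generators in degrees matching $\boldsymbol{a}=(2,4)$ and $\operatorname{rank} F_1=6$ with six syzygies in the pattern prescribed by $\boldsymbol{b}=(1,1,3)$. If a first choice of $X$ fails to realize this presentation, one can deform within the Hilbert scheme of degree $5$ locally complete intersection curves in $\mathbb{P}^3$ until the required postulation is achieved, relying on semicontinuity of Betti numbers to control the specialization.
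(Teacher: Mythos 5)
You have identified the right method (construction $(a)$ applied to a degree-$5$ locally complete intersection curve, giving $c_2=2\cdot5-1=9$), and your side remark that Lemma \ref{lema48} cannot produce this middle term in one step is correct. But the proposal stops exactly where the actual proof has to begin: you never exhibit the curve and never perform the cohomology computation, and the "main obstacle" you flag at the end is the entire content of the statement. The paper's proof takes the specific curve $X=P_4\cup P_1$, a plane quartic and a line joined at a single point $x_0$; the sequence $0\to\omega_{P_4}\oplus\omega_{P_1}\to\omega_X\to\omega_{\{x_0\}}\to0$ together with $\omega_{P_4}=\mathcal{O}_{P_4}(1)$, $\omega_{P_1}=\mathcal{O}_{P_1}(-2)$ and the sequences \eqref{eq2.3}, \eqref{eq2.4} yields $h^1(\EE(-1))=12$, $h^1(\EE(-2))=7$, $h^1(\EE(-3))=3$, $h^1(\EE(-4))=1$ and $h^1(\EE(-l))=0$ for $l\geq5$, i.e.\ spectrum $\X_{10}^{9}=\{\ldots,0,1^2,2,3\}$. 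This choice is not a detail that can be deferred: your default suggestion of a smooth rational quintic has $\omega_X=\mathcal{O}_{\mathbb{P}^1}(-2)$, hence $h^1(\EE(-3))=h^0(\omega_X)=0$, so its spectrum does not reach $\pm2$ and the construction lands in an entirely different monad; it is precisely the plane quartic component (with $h^0(\omega_{P_4}(-1))=1$) that forces the long tail of $\X_{10}^9$. A vague appeal to "deforming until the required postulation is achieved" with semicontinuity does not repair this, since semicontinuity only bounds the invariants in one direction and gives no existence statement.

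You are also aiming at more than is needed, which makes the verification look harder than it is. Once the bundle is known to be stable with spectrum $\X_{10}^9$, Theorem \ref{possibles} and equation \eqref{c_2} (as recorded in Table \ref{terms} and the list preceding Lemma \ref{nonstable22}) already force the unique positive minimal monad: $\rho(-4)=1$ and $\rho(-2)\in\{0,1\}$, the case $\rho(-2)=0$ gives $B_1=7$ with no solution, and $\rho(-2)=1$ gives $\boldsymbol{a}=(4,2)$ and $B_2=11$, whose only solution is $\boldsymbol{b}=(3,1,1)$. So one only needs the numbers $h^1(\EE(-l))$, not the full Betti table of the minimal free presentation of the Rao module (generators \emph{and} first syzygies of $\mathcal{I}_Y$) that your plan requires; computing the latter directly would be a substantially heavier calculation that the paper's argument deliberately bypasses.
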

\begin{proof}
Let's apply the construction (a) and for this, we take the curve $X:=P_4\cup P_1$ where $P_4$ and 
$P_2$ are plane curves of degree 4 and 2, respectively, joined at a point $x_0$. Applying construction (a),
we obtain a stable vector bundle $\EE$ such that $c_1=0, c_2=9$. Now we computer the spectrum of $\EE$. To computer 
the dualizing sheaf $\omega_X$ we recall the exact sequence
\begin{equation*}
0\longrightarrow\omega_{P_4}\oplus\omega_{P_1}\longrightarrow\omega_X\longrightarrow\omega_{\{x_0\}}\longrightarrow0.    
\end{equation*}
We know that $\omega_{P_4}=\mathcal{O}_{P_4}(1)$, $\omega_{P_1}=\mathcal{O}_{P_1}(-2)$ 
and $h^0(\omega_{\{x_0\}}(m))=1, m\in\mathbb{Z}$ and from exact sequences \eqref{eq2.3} and \eqref{eq2.4} follows that
$$h^1(\EE(-1))=12, h^1(\EE(-2))=7, h^1(\EE(-3))=3, h^1(\EE(-4))=1$$
and $h^1(\EE(-l))=0, l\geq5$. Therefore, $\X=\{\ldots, 0, 1^2, 2,3\}$ and by Table \ref{terms} the only 
possible of positive monad has $\boldsymbol{b}=3,1^2$ and $\boldsymbol{a}=4, 2$.
\end{proof}

\begin{prop}
The positive minimal Horrocks monads $\mathbf{M}$ such that $\boldsymbol{b}=2^2,1^2$ 
 and $\boldsymbol{a}=3^2, 2$ and $\mathbf{M'}$ with $\boldsymbol{b}=2^2,1$ 
 and $\boldsymbol{a}=3^2$ exist. Furthermore, its cohomologies  are stable bundles.
\end{prop}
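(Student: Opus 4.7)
The plan is to realize both monads as minimal Horrocks monads of stable rank~2 bundles with $c_1=0$, $c_2=9$ by explicitly constructing the underlying curves via the Serre correspondence of Subsection~\ref{serre}. By the enumeration in Section~\ref{section3}, these two monads are the only candidates for the spectrum $\X_9^9$, corresponding to the admissible Rao-module generator profiles $(\rho(-3),\rho(-2)) = (2,0)$ for $\mathbf{M'}$ and $(\rho(-3),\rho(-2)) = (2,1)$ for $\mathbf{M}$. Hence it is enough to exhibit two stable rank~2 bundles with spectrum $\X_9^9$ realizing each profile.

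Following Proposition~\ref{existence1} (construction~(a)), I pick a locally complete intersection curve $X\subset \mathbb{P}^3$ of degree~$5$ (so that $c_2 = 2\deg X - 1 = 9$), whose components carry no global sections, form the Ferrand double structure $Y$ through~\eqref{eq2.3}, and obtain from~\eqref{eq2.4} a stable rank~2 bundle $\EE$ with $c_1(\EE)=0$ and $c_2(\EE)=9$. The cohomology $h^1(\EE(-l))$ is computed through the twists of~\eqref{eq2.3} and~\eqref{eq2.4} together with the cohomology of $\omega_X$, the latter being obtained from the Mayer--Vietoris style sequence
\[
0\lar \omega_{X_1}\oplus \omega_{X_2}\lar \omega_X \lar \omega_{\{x_0\}}\lar 0
\]
when $X=X_1\cup X_2$ is joined at a single point $x_0$, exactly as in the proof of Proposition~\ref{existence1}. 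I plan to take $X$ a smooth connected degree~$5$ curve (for instance of bidegree $(2,3)$ on a smooth quadric) for one of the monads, and $X$ a union of two plane curves joined at a point (of degrees summing to~$5$, say $P_3\cup P_2$ or $P_4\cup P_1$) for the other.

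For each choice, the cohomology computation yields the numerical spectrum via property~\eqref{S4} and pins down $\X(\EE)=\X_9^9$, while $\rho(-2)$ is read off from the rank of the multiplication
\[
A_1 \otimes H^1(\EE(-3)) \lar H^1(\EE(-2)).
\]
Stability of $\EE$ reduces, by Subsection~\ref{serre}, to the vanishing $H^0(\mathcal{I}_Y(2))=0$, which is a direct check on $X$ using~\eqref{eq2.3}. The main obstacle is to arrange the two curves so that the above multiplication map has precisely the required rank in each case: surjective (so $\rho(-2)=0$, giving $\mathbf{M'}$) for one choice, and of corank~$1$ (so $\rho(-2)=1$, giving $\mathbf{M}$) for the other. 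This amounts to a delicate cohomological analysis on the chosen curves and is the most subtle part of the argument; once it is settled, the shape of the minimal Horrocks monad is pinned down uniquely by Section~\ref{section3}, so that both existence and stability follow.
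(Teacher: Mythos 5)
Your overall strategy coincides with the paper's: both monads are produced by construction (a), i.e.\ by choosing a degree-$5$ locally complete intersection curve $X$, doubling it via \eqref{eq2.3}--\eqref{eq2.4}, and then reading off the monad from the spectrum and from $\rho(-2)$ using the enumeration of Section~\ref{section3}. Your choice of a curve of bidegree $(2,3)$ on a smooth quadric for the monad $\mathbf{M'}$ is exactly the paper's choice $C_{3,2}$. However, there is a concrete gap in the other case: neither of the curves you propose ($P_3\cup P_2$ or $P_4\cup P_1$ joined at a \emph{single} point) yields the spectrum $\X_9^9$. Since $h^1(\EE(-3))\simeq H^0(\omega_X)$ and $h^0(\omega_X)=p_a(X)$, the spectrum $\X_9^9=\{\dots,0,1^2,2^2\}$ forces $p_a(X)=2$; but $P_3\cup P_2$ glued at one point has $p_a=1$, and $P_4\cup P_1$ glued at one point has $p_a=4$ and in fact produces the spectrum $\X_{10}^9$ (this is precisely the curve used in Proposition~\ref{existence1} for the monad with $\boldsymbol{a}=4,2$). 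The paper instead takes $P_2\cup P_3$ joined at \emph{two} points, which is what makes $h^1(\mathcal{F}(-3))=2$ and lands the spectrum in $\X_9^9$.

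The second, and decisive, gap is the computation of $\rho(-2)$, which you explicitly defer as ``a delicate cohomological analysis.'' This is not a routine verification that can be left open: it is the entire content distinguishing $\mathbf{M}$ from $\mathbf{M'}$, and without it neither existence statement is established. The paper settles it by identifying the multiplication map concretely: for $C_{3,2}$ one has $H^1(\EE(-3))\simeq H^0(\mathcal{O}_Q(1,0))$ and $H^1(\EE(-2))\simeq H^0(\mathcal{O}_Q(1,0)(1))$, so multiplication by linear forms is surjective and $\rho(-2)=0$; for $P_2\cup P_3$ joined at two points, the summand $H^0(k(x_0))$ of $H^1(\mathcal{F}(-3))$ contributes a generator in the next degree and forces $\rho(-2)=1$. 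Finally, a small correction: with the section living in $H^0(\EE(1))$, stability is equivalent to $H^0(\mathcal{I}_Y(1))=0$ (via \eqref{eq2.4} twisted by $-1$), not to $H^0(\mathcal{I}_Y(2))=0$, which is both the wrong twist and a condition that may fail without affecting stability.
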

\begin{proof}
If we take $X:=P_2\cup P_3$ a union of planes curves of degree $2$ and $3$, respectively, joined at two 
point $\{x_0, x_1\}$ then the construction (a) provides a stable bundle $\mathcal{F}$ with 
$c_1=0, c_2=9$ and repeating the argument of the proof of the 
Proposition \ref{existence1} we obtain the spectrum $\X_9^9$. Furthermore,
\begin{equation*}
\begin{array}{l}
 H^1(\mathcal{F}(-3))\simeq H^0(\omega_X)\simeq H^0(\mathcal{O}_{P_3})\oplus H^0(k(x_0))\\
 H^1(\mathcal{F}(-2))\simeq H^0(\mathcal{O}_{P_2})\oplus H^0(\mathcal{O}_{P_3}(1))\oplus H^0(\omega_S(1)).\\
 \end{array}
 \end{equation*}
 Therefore, $\rho(-2)=1$. This is sufficient to conclude that $\mathbf{M}$ is the positive monad whose 
 cohomology is the stable bundle $\mathcal{F}$. Now 
 let $X:=C_{3,2}$ be a curve of type $(3,2)$ on a smooth quadric $Q$ where we have the exact sequence
 \begin{equation*}
0\longrightarrow\mathcal{O}_Q(-2,-2)\longrightarrow\mathcal{O}_Q(1,0)\longrightarrow\omega_X\longrightarrow0.
 \end{equation*}
 By the construction (a) there is a stable bundle on $\mathbb{P}^3$ such that
 $$ H^1(\mathcal{\EE}(-3))\simeq H^0(\mathcal{O}_Q(1,0)) \mbox{ and } H^1(\mathcal{\EE}(-2))\simeq H^0(\mathcal{O}_Q(1,0)(1)).$$
 Therefore, we get $\rho(-2)=0$ and from Table \ref{terms} we obtain that $\mathbf{M'}$ is the minimal positive 
 Horrocks monad whose cohomology is the stable bundle $\EE$.
 \end{proof}

In Table \ref{table:c2=10} it will be tabulating all positive and negative minimal Horrocks monads, with the 
possible exceptions of $\boldsymbol{a}=(3,2,0), \boldsymbol{b}=(1^4)$ and 
$\boldsymbol{a}=(3,2^2, 0), \boldsymbol{b}=(2, 1^4)$,   
whose cohomology is a rank $2$ vector bundle on $\mathbb{P}^3$ with even determinant and $c_2=9$, . Let's 
establish the following notation on the Table: \\
\begin{enumerate}
    \item[(1)] The fourth column indicates the degree in which the global section exists for constructing the vector 
    bundle $\EE$, giving a curve $Y$ as a zero scheme.
    \item[(2)] The last column describe as we construct the curve $X$ by applying the Lemma \ref{lema48}.
    \item [(3)] $P_n$ denotes a plane curve of degree $n$ while $C_{a,b}$ denotes a curve of bidegree $(a,b)$ on a smooth quadric.
\end{enumerate}


\newpage
	\begin{table}[hht]
		\begin{center}
	\begin{tabular}{ | l | c | c | c |c|} 
\hline
Spectrum & $\boldsymbol{b}$ &  $\boldsymbol{a}$ &$r$ &Construction\\ 
\hline
$\X^{9}_1$ & $0^{20}$ & $1^9$ &$1$ & Instanton\\
\hline
\multirow{2}{4em}{$\X^{9}_2$}&$0^{14}$ &$2,1^5$& $\leq2$& (b): $8(2, \i), P_1$ or (b): $5(1), C_{2,2}$\\
&$1, 0^{14}$& $2,1^6$& $1$ &(b): $8(2, \ii), P_1$\\
\hline
\multirow{4}{4em}{$\X^{9}_3$}& $0^8$& $2^2, 1$&$1$ &(b): $8(3, \i), P_1$\\
& $1, 0^8$& $2^2, 1^2$ &$1$&(b): $8(3, \ii), P_1$\\
& $1^2, 0^8$& $2^2, 1^3$ &$1$&(b): $8(3, \iii), P_1$\\
& $1^3, 0^8$& $2^2, 1^4$&$1$&(b): $8(3, \iv), P_1$\\
\hline
\multirow{4}{4em}{$\X^{9}_4$}& $1, 0^4$& $3, 1$&$1$&(b): $8(6, \i), P_1$\\
& $1^2, 0^4$&$3,1^2$ &1&(b): $8(6, \ii), P_1$\\
& $2,0^{10}$&$3,1^4$ &$1$&(b): $8(5), P_1$\\
\hline
\multirow{6}{4em}{$\X^{9}_5$}& $0^4$& $3$&$3$ &Ein\\
& $2, 0^4$&$3,2$ &$2$&(b): $5(4), C_{2,2}$\\
& $1, 0^4$& $3,1$ &$1$&(b): $8(6, \i), P_1$\\
& $2,1, 0^{4}$&$3,2,1$ &$1$&(b): $8(6, \iii), P_1$\\
& $1^2, 0^4$& $3,1^2$ &$1$&(b): $8(6, \ii), P_1$\\
& $2,1^2, 0^{4}$&$3,2,1^2$ &$1$&(b): $8(6, \iv), P_1$\\
\hline
$\X^{9}_6$& $3, 0^6$&$4, 1^2$ & $1$&(b): $8(7), P_1$\\
\hline
$\X^{9}_7$&$1^7$& $2^4,0^2$&-&$C_{1,4}$\\
\hline
\multirow{3}{4em}{$\X^{9}_8$}& $2,0^4$& $3,2$&$2$&(b): $5(2), P_1$\\
& $1^4$& $3,2,0$& ?&?\\ 
& $2,1^4$& $3,2^2,0$& ?&?\\
 \hline
 \multirow{2}{4em}{$\X^{9}_{9}$}& $2^2,1$& $3^2$&$1$& (a): $C_{3,2}$\\
 & $2^3,1$&$3^2, 2$&$1$ &(a): $P_2\cup P_3$ joined at two points\\
 \hline
$\X^{9}_{10}$& $3,1^2$& $4, 2$& $1$ & (a): $P_4\cup P_1$ joined at a point\\
\hline
$\X^9_{11}$ & $4, 0^2$ & $5$& $1$& (a): $P_5$\\
\hline

	\end{tabular}
\caption{Positive and nonegative minimal Horrocks monads whose cohomology is a stable bundle with even determinant and $c_2=9$.}
\label{table:c2=10}
\end{center}
\end{table}
\section{Irreducible components of $\mathcal{B}(n)$ for $n\leq9$}\label{sec5}
The goal of this Section is to produce a new infinite series 
of components of $\mathcal{B}(n)$. A complete description of $\mathcal{B}(n)$ is known 
for $n\leq5$. More precisely, the moduli 
space $\mathcal{B}(1)$ was studied by Bath in \cite{B77} which is irreducible 
of dimension $5$ (see also \cite{H78}) while the moduli space $\mathcal{B}(2)$ was 
described by Hartshorne in \cite{H78} where he shows that this space is irreducible of 
dimension $13$; The moduli space $\mathcal{B}(3)$ was described in \cite{ES} which is 
nonsingular and has two irreducible components, both of dimension $21$. We remember that the moduli 
space $\mathcal{B}(4)$ has two irreducible components of dimension $29$, see \cite{Bar81} 
and \cite{C83}. Recently, 
in \cite{AJTT} is provided a characterization of $\mathcal{B}(5)$ which has three irreducible 
components, two components of dimension 37 and one of dimension 40. Two families of irreducible 
components of $\mathcal{B}(n), n\geq1$ have been studied, the so-called \textit{instanton component} whose 
generic point corresponds to an instanton bundle and the \textit{Ein component} where a generic point 
is a generalized null correlation bundle c.f. \cite{Ein88}. It is important to highlight that all irreducible components 
of $\mathcal{B}(n)$ for $n\leq4$ are of either of these types. 

In \cite{TV2019} is constructed a new infinite series $\Sigma_0$ whose elements are 
irreducible components $\mathcal{M}_n$ of the moduli space $\mathcal{B}(n)$, distinct from the series of 
instanton components and from the series of Ein components, and a generic bundle of 
these components is given as cohomology sheaf of a monad whose middle 
term is a rank 4 sympletic instanton bundle. The family of irreducible components $\mathcal{M}_n$ of $\mathcal{B}(n)$ of 
\cite[Theorem 1]{TV2019} generalizes the family of irreducible components 
$\mathcal{G}(k,1)$ in \cite{AJTT} with $n=1+k^2, k\in\{2\}\cup(4,\infty)$.

\begin{Lemma}
Let $a$ be an integer such that $a\geq3$. The minimal Horrocks monad $\mathbf{M}$ given by
\begin{equation}\label{monad1}
2\cdot\op3(-a)\stackrel{\alpha}\rightarrow2\cdot\op3(a-1)\oplus2\cdot\op3(1-a)\oplus(\op3(1)\oplus\op3(-1))\stackrel{\beta}\rightarrow2\cdot\op3(a)
\end{equation}
exists and its cohomology $\EE$ is a stable bundle with $c_1(\EE)=0, c_2(\EE)=4a-3$ and spectrum
\begin{equation}\label{spectrum1}
 \mathcal{X}=\{{(1-a)}^2,\cdots,{-1}^2,0,1^2,\cdots,{(a-2)}^2, {(a-1)}^2\}.   
\end{equation}
\end{Lemma}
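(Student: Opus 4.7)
My plan is to construct $\EE$ explicitly via the Hartshorne–Serre correspondence (Construction (a) of Section \ref{section4}) applied to a smooth curve $X$ of bidegree $(a,a-1)$ on a nonsingular quadric $Q\subset\mathbb{P}^3$, and then extract the minimal monad from the resulting cohomology data. This generalises the base case $a=3$, where the bundle with spectrum $\X_9^9$ and monad $\boldsymbol{a}=3^2$, $\boldsymbol{b}=(1,2,2)$ was produced from $X=C_{3,2}$ in the previous section.

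First, let $X\subset Q$ be a smooth connected curve of bidegree $(a,a-1)$. By adjunction $\omega_X\simeq\mathcal{O}_Q(a-2,a-3)|_X$, and so $\omega_X(2)\simeq\mathcal{O}_Q(a,a-1)|_X$ is globally generated whenever $a\ge 3$. Invoking \cite[Proposition 2.8]{HR91}, Construction (a) produces a double structure $Y\supset X$ with $\omega_Y\simeq\mathcal{O}_Y(-2)$ together with a rank $2$ bundle $\EE$ fitting into $0\to\mathcal{O}_{\mathbb{P}^3}\to\EE(1)\to\mathcal{I}_Y(2)\to 0$, with $c_1(\EE)=0$ and $c_2(\EE)=2\deg X-1=4a-3$. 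The stability criterion from Subsection \ref{serre} applies: since $X$ is nondegenerate one has $H^0(\mathcal{I}_X(1))=0$, and the inclusion $\mathcal{I}_Y\subset\mathcal{I}_X$ then forces $H^0(\mathcal{I}_Y(1))=0$, so $\EE$ is stable.

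Next, the Serre sequence gives $h^1(\EE(l))=h^1(\mathcal{I}_Y(l+1))$, which, combined with the canonical sequence $0\to\omega_X(2)\to\mathcal{O}_Y\to\mathcal{O}_X\to 0$, reduces each $m_l:=h^1(\EE(l))$ to a combination of $h^0(\omega_X(l+3))$, $h^0(\mathcal{O}_X(l+1))$ and $h^0(\mathcal{O}_{\mathbb{P}^3}(l+1))$. The Künneth formula on $Q\cong\mathbb{P}^1\times\mathbb{P}^1$ delivers each of these in closed form, and the differences $n_l=m_{-l}-m_{-l-1}$ predicted by \eqref{S4} match those forced by the claimed spectrum $\mathcal{X}$. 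With $\mathcal{X}$ in hand, Theorem \ref{teo4}(3) yields $\rho(i)=0$ for every $i\ge 0$, while Theorem \ref{possibles} yields $\rho(-a)=s(a-1)=2$; the explicit form of $H^1_*(\EE)$ from the previous step shows that all its minimal generators live in degree $-a$, so $\rho(-i-1)=0$ for $0\le i\le a-2$ and consequently $\mathcal{A}=2\cdot\op3(-a)$, $\mathcal{C}=2\cdot\op3(a)$. Equation \eqref{c_2} then forces $\sum_{j=1}^{3}b_j^2=2(a-1)^2+1$, and inspecting the degrees of the first syzygies of the Rao module pins down $(b_1,b_2,b_3)=(1,a-1,a-1)$, producing the monad \eqref{monad1}.

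The delicate point is the last step, namely pinning down the middle term $\mathcal{B}$ exactly: the Diophantine identity $\sum b_j^2=2(a-1)^2+1$ can in principle admit further integer solutions for particular values of $a$, and ruling them out depends on the explicit syzygy data of $H^1_*(\EE)$ derived in the preceding step, or, as a back-up, on an instability argument in the spirit of Lemma \ref{nonstable22}. Once $\mathcal{B}$ is fixed, the existence of maps $\alpha,\beta$ making the whole complex into a monad is automatic from the existence of the bundle $\EE$ together with the uniqueness (up to isomorphism) of the minimal Horrocks monad of $\EE$.
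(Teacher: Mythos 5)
Your route is genuinely different from the paper's. The paper proves the lemma by brute force: it exhibits explicit matrices $\alpha$ and $\beta$ (found with Macaulay2), checks $\beta\circ\alpha=0$ together with injectivity and surjectivity, reads off stability from the absence of syzygies of $\beta$ in degrees $\geq 0$, and computes the spectrum from the shape of the monad via \eqref{S4}. You instead build the bundle geometrically, applying construction (a) to a curve of bidegree $(a,a-1)$ on a quadric and then trying to recover the monad from cohomological data. The first half of your argument is reasonable and consistent with the paper's own $a=3$ case (where $C_{3,2}$ produces the bundle with spectrum $\X_9^9$ and monad $\boldsymbol{a}=3^2$, $\boldsymbol{b}=(2,2,1)$), although two steps are only sketched: the Künneth computation of the $m_l$, and the claim that every minimal generator of $H^1_*(\EE)$ lives in degree $-a$, which requires surjectivity of the multiplication maps $M_l\otimes H^0(\op3(1))\to M_{l+1}$ for $-a\le l\le -1$ and is asserted rather than proved.

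The genuine gap is the final step, which you yourself flag but do not close: the data you have actually derived (the Hilbert function of the Rao module and its generator degrees) determines $\boldsymbol{a}=(a,a)$ but not $\boldsymbol{b}$; for that one needs the degrees of the minimal relations of $H^1_*(\EE)$, which you never compute. The Diophantine constraint $b_1^2+b_2^2+b_3^2=2(a-1)^2+1$ with $0\le b_j\le a-1$ is not enough: already for $a=8$ one has $99=7^2+7^2+1^2=7^2+5^2+5^2$, so the competitor $\boldsymbol{b}=(7,5,5)$ cannot be excluded on numerical grounds, and neither the promised syzygy data nor the fallback instability argument in the spirit of Lemma \ref{nonstable22} is supplied. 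As written, the proposal establishes (modulo the sketched cohomology computations) the existence of a stable bundle with the right Chern classes and spectrum, but not that its minimal Horrocks monad is \eqref{monad1}; the lemma is therefore not proved. To salvage your approach you would need either an explicit minimal free resolution of the Rao module of the double structure on $C_{a,a-1}$, or a separate argument eliminating the alternative middle terms for every $a$.
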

\begin{proof}
We prove the existence of the monad \eqref{monad1} by an explicit computation. In fact, with the help of the Macaulay2 we 
found the morphisms $\alpha$ and $\beta$ as
$$\beta=\left(
\begin{array}{cccccc}
x& 0&w^{2a-1}& z^{2a-1}& y^{a-1}& z^{a+1}\\
y& x& z^{2a-1}& w^{2a-1}& x^{a-1}& 0\\
\end{array} \right)
~\mbox{ and }~\alpha=\left(
\begin{array}{cc}
w^{2a-1}&0\\
z^{2a-1}&w^{2a-1}+x^{a-2}z^{a+1}\\
-x&0\\
-y&-x\\
0&-z^{a+1}\\
yz^{a-2}&xz^{a-2}+y^{a-1}\\
\end{array}
\right),$$
such that $\alpha$ is injective, $\beta$ is surjective and $\beta\circ\alpha=0$. This is sufficient 
to show that the minimal Horrocks monad in \eqref{monad1} exists and therefore its cohomology is 
a vector bundle $\EE$ such that $c_1(\EE)=0, c_2(\EE)=4a-3$ and furthermore, the morphism $\beta$ does not 
admit syzygies of degree $\geq0$ and so the vector bundle $\EE$ is stable. On the other hand, from 
display of the monad in \eqref{monad1} we have for $l\geq2$
$$n_l=2h^0(\op3(a-l))-2h^0(\op3(a-l-1))=(a+2-l)(a+1-l) \mbox{ and } n_1=2a-1.$$
By applying the identity in \eqref{S4} we prove that the spectrum of $\EE$ is as in \eqref{spectrum1}.

\end{proof}
Now we make some considerations to computer the dimension of the family of minimal monads of the form 
\eqref{monad1} which can be found with more details in \cite{BH78} and \cite{MF2021}. Define 
$\mathcal{P}(\boldsymbol{a})$ as the family of minimal 
Horrocks monads of the form \eqref{monad1} with $\mathcal{A}:=2\cdot\op3(a)$ and 
$\mathcal{B}$ the middle term of a monad of the form 
\eqref{monad1}, which are homotopy free. Let's denote $\mathcal{V}(\boldsymbol{a})$ to be the set of isomorphism 
classes of stable vector bundles given as cohomology of minimal Horrocks monads of 
$\mathcal{P}(\boldsymbol{a})$. From \cite[Section 8]{MF2021}, the dimension of $\mathcal{V}(\boldsymbol{a})$ is given by formula 
\begin{equation} \label{dim formula}
\dim \mathcal{V}(\boldsymbol{a})=\dim H-\dim W-\dim\gl(\mathcal{A})-\dim G,
\end{equation}
where for this family we have
$\begin{array}{l}
\dim H=16+2{a+2\choose3}+2{a+4\choose3}+4{2a+2\choose3};\\
\dim\gl(\mathcal{A})=4;\\
\dim W=h^0(\op3(2a))={2a+3\choose3};\\
\dim G=15+2{a+1\choose3}+2{a+3\choose3}+3{2a+1\choose3}.\\
\end{array}$

By substituting in the Equation \eqref{dim formula} we obtain
$$\dim \mathcal{V}(\boldsymbol{a})=a(a+1)+(a+3)(a+2)+(2a+1)(2a-1)-3=6a^2+6a+2$$
Thus we can observe that the dimension of $\mathcal{V}(\boldsymbol{a})$ is larger than the 
expected dimension $8c_2-3=32a-27$ of the moduli space $\mathcal{B}(4a-3)$ because 
$$\dim \mathcal{V}(\boldsymbol{a})>32a-27\Leftrightarrow6a^2-26a+29>0,$$
which is always true since $a\geq3$.
\begin{Theorem}
Let $\mathcal{B}(9)$ the moduli space of the stable rank 2 vector bundles on $\mathbb{P}^3$ with 
$c_1=0$ $c_2=9$. The moduli space $\mathcal{B}(9)$ has at least 4 components: a Hartshorne component $M_1$ 
of dimension 69, two Ein components $M_2, M_3$ of dimension $69$ and $96$, respectively, and a new component $M_4$ 
whose dimension is $\geq74$.
\end{Theorem}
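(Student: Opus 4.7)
The plan is to exhibit four pairwise distinct irreducible components of $\mathcal{B}(9)$, three coming from the classical Hartshorne and Ein constructions and a fourth coming from the family of monads produced by the preceding lemma.

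First I would collect the three classical components. The instanton monad $9\cdot\op3(-1)\to 20\cdot\op3 \to 9\cdot\op3(1)$ realises the Hartshorne component $M_1$ with generic spectrum $\X_1^9$, whose irreducibility and dimension $8c_2-3=69$ are classical. Consulting Table \ref{table:c2=10}, only the triples $(c,a,b)=(3,0,0)$ and $(c,a,b)=(5,4,0)$ yield Ein monads with $c_2=9$; by \cite{Ein88} their cohomologies fill out irreducible components $M_2$ and $M_3$ of dimensions $69$ and $96$ with generic spectra $\X_5^9$ and $\X_{11}^9$ respectively.

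Next I would use the preceding lemma with $a=3$ to produce the fourth candidate. The family $\mathcal{V}(\boldsymbol{a})$ of bundles arising as cohomology of the monad \eqref{monad1} is non-empty, every element is stable with $c_1=0$, $c_2=9$ and spectrum $\{-2^2,-1^2,0,1^2,2^2\}$, and the formula $6a^2+6a+2$ derived from \eqref{dim formula} gives $\dim\mathcal{V}(\boldsymbol{a})=74$. Since the parameter space of admissible pairs $(\alpha,\beta)$ is an open subset of a product of spaces of homogeneous matrices, it is irreducible, and hence so is its image in $\mathcal{B}(9)$. Consequently $\mathcal{V}(\boldsymbol{a})$ is contained in a unique irreducible component $M_4$ of $\mathcal{B}(9)$ with $\dim M_4\geq 74$.

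Finally I would verify that $M_4$ is distinct from the previous three components. The inequality $74>69$ immediately rules out $M_4=M_1$ and $M_4=M_2$. For $M_4\neq M_3$ the dimension comparison is inconclusive since $74<96$; instead I would invoke upper semicontinuity of $\EE\mapsto h^1(\EE(-2))$ on $\mathcal{B}(9)$. A generic bundle of $M_3$ has spectrum $\X_{11}^9$, so by property \ref{itemb1} its value is $\sum_{k_i\geq 1}k_i=1+2+3+4=10$; irreducibility of $M_3$ together with semicontinuity then force $h^1(\EE(-2))\geq 10$ on all of $M_3$. On the other hand, for every $\EE\in\mathcal{V}(\boldsymbol{a})$ the same formula gives $h^1(\EE(-2))=1+1+2+2=6<10$, so $\mathcal{V}(\boldsymbol{a})\cap M_3=\emptyset$ and therefore $M_4\neq M_3$. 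The main obstacle will be precisely this final separation step: since $\dim M_4$ could a priori be as small as $74$ while $\dim M_3=96$, the dimension count cannot exclude $\mathcal{V}(\boldsymbol{a})$ sitting as a proper subvariety of the Ein component $M_3$, and the spectrum-theoretic semicontinuity argument is the essential ingredient that rules this out.
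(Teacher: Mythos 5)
Your proposal is correct and follows essentially the same route as the paper: dimension counts separate $M_4$ from $M_1$ and $M_2$, and semicontinuity of a first cohomology group computed from the spectra separates it from $M_3$ (the paper uses $h^1(\mathcal{E}(-3))$, with values $2$ versus $6$, instead of your $h^1(\mathcal{E}(-2))$, with values $6$ versus $10$ --- an immaterial difference). Your explicit remark that irreducibility of the parameter space of pairs $(\alpha,\beta)$ places $\mathcal{V}(\boldsymbol{a})$ inside a single component is a point the paper leaves implicit.
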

\begin{proof}
The Hartshorne component has an expected dimension equal to $69$. A generic bundle of the Ein component $M_2$ is 
given as cohomology of a minimal Horrocks monad 
$$0\rightarrow\op3(-3)\rightarrow\op3^{\oplus4}\rightarrow\op3(3)\rightarrow0$$
while a generic bundle of the Ein component $M_3$ is given as cohomology of a minimal monad 
\begin{equation}\label{ein3}
 0\rightarrow\op3(-5)\rightarrow\op3(4)\oplus\op3^{\oplus2}\oplus\op3(-4)\rightarrow\op3(5)\rightarrow0.   
\end{equation}
The dimension of the Ein components $M_2$ and $M_3$ can be computed by formula \cite[2.2.B, p.g. 18]{Ein88} or by
Equation \eqref{dim formula} where we obtain $\dim M_2=69$ and $\dim M_3=96$. 

We take $a=3$ in $\eqref{monad1}$ and from equation \eqref{dim formula} we have 
$\dim \mathcal{V}(3^2;2,1)=74$. In this way, the family $\mathcal{V}(3^2;2,1)$ 
cannot be contained in either $M_1$ or $M_2$. On the other hand, if $\mathcal{F}$ is a generic point of 
$M_3$ then $h^1(\mathcal{F}(-3))=6$ and from inferior semicontinuity we have 
$h^1(\mathcal{G}(-3))\geq6, \forall \mathcal{G}\in M_3$. For $\mathcal{E}\in\mathcal{V}(3^2;2,1)$ we know 
that $h^1(\mathcal{E}(-3))=2$ and thus $\mathcal{V}(3^2;2,1)$ cannot be contained in 
$M_3$. Therefore, $\mathcal{V}(3^2;2,1)$ is contained in a new component $M_4$ of dimension at least $74$. 
\end{proof}
\begin{Remark}
 To prove that $\mathcal{V}(3^2;2,1)$ is not contained in $M_4$ we could observe that $M_4$ 
 has the form $N'(0,4,5)$ in \cite[Theorem 3.3]{Ein88} and therefore $\mathcal{V}(3^2;2,1)$ does not intersect other 
 irreducible component of $\mathcal{B}(9)$.  
\end{Remark}


\newpage
\bibliographystyle{amsalpha}

\end{document}